\newcommand{\vol}{\mathrm{Vol}}
\newcommand{\ep}{\epsilon}
\newcommand{\mc}{\mathcal{C}}
\newcommand{\ran}{\operatorname{Ran}}
\newtheorem{thm}{Theorem}[section]
\newtheorem{lmm}[thm]{Lemma}
\newtheorem{cor}[thm]{Corollary}
\theoremstyle{remark}
\newtheorem{remark}[thm]{Remark}
\theoremstyle{definition}
\newtheorem{defn}[thm]{Definition}
\newcommand{\ee}{\mathbb{E}}
\newcommand{\mf}{\mathcal{F}}
\newcommand{\pp}{\mathbb{P}}
\newcommand{\ra}{\rightarrow}
\newcommand{\rr}{\mathbb{R}}
\newcommand{\tr}{\operatorname{Tr}}
\newcommand{\om}{\Omega}
\newcommand{\dbar}{{\overline{\om}}}
\numberwithin{equation}{section}
\newcommand{\la}{\left\langle}
\newcommand{\ral}{\right\rangle}
\newcommand{\re}{\mathbb{R}}
\renewcommand{\mc}[1]{\mathcal{#1}}
\newcommand{\Cc}{C_c^\infty}
\newcommand{\mfd}{\mf_{\Delta}}
\newcommand{\e}{\epsilon}
{
      \theoremstyle{plain}
      
 }
\renewcommand{\O}[1]{{\textup{O}}_{#1}}
\renewcommand{\o}[1]{{\textup{o}}_{#1}}
\DeclareMathOperator*{\supp}{supp}
\begin{document}
\title[Eigenfunctions of perturbed Laplacians]{Arbitrarily small perturbations of Dirichlet Laplacians are quantum unique ergodic}
\author{Sourav Chatterjee}
\address{\newline Department of Statistics \newline Stanford University\newline Sequoia Hall, 390 Serra Mall \newline Stanford, CA 94305\newline \newline \textup{\tt souravc@stanford.edu}}
\author{Jeffrey Galkowski}
\address{\newline Department of Mathematics \newline Stanford University\newline 380 Serra Mall \newline Stanford, CA 94305\newline \newline \textup{\tt jeffrey.galkowski@stanford.edu}}
\thanks{Sourav Chatterjee's research was partially supported by NSF grant DMS-1441513}
\thanks{Jeffrey Galkowski's research was partially supported by the Mathematical Sciences
Postdoctoral Research Fellowship DMS-1502661.}
\keywords{Quantum unique ergodicity, quantum chaos, Laplacian eigenfunction}
\subjclass[2010]{58J51, 81Q50, 35P20, 60J45}

\begin{abstract}
Given an Euclidean domain with very mild regularity properties, we prove that there exist perturbations of the Dirichlet Laplacian of the form $-(I+S_\e)\Delta$ with $\|S_\e\|_{L^2\to L^2}\leq \e$ whose high energy eigenfunctions are quantum uniquely ergodic (QUE). Moreover, if we impose stronger regularity on the domain, the same result holds with $\|S_\e\|_{L^2\to H^\gamma}\leq \e$ for $\gamma>0$ depending on the domain. We also give a proof of a local Weyl law for domains with rough boundaries.
\end{abstract}

\maketitle


\section{Introduction}\label{intro0}
In quantum mechanics, the Laplace operator on a manifold describes the behavior of a free quantum mechanical particle confined to the manifold. The eigenvalues of the Laplacian (under suitable boundary conditions) are the possible values of the energy of the particle and the eigenfunctions are the energy eigenstates. The square of an energy eigenstate gives the probability density function for the location of a particle with the given energy.

The subject of quantum chaos connects the properties of high energy eigenstates with the chaotic properties of the geodesic flow. One important result is the quantum ergodicity theorem due to  \citet{snirelman74}, \citet{colindeverdiere85}, and \citet{zelditch87} on manifolds without boundary and generalized to manifolds with boundary by \citet{gerardleichtnam93} and  \citet{zelditchzworski96}. The theorem states that if the geodesic flow on a manifold is ergodic, then almost all high energy eigenfunctions (in any orthonormal basis of eigenfunctions) equidistribute over the manifold in the sense that $|u|^2\to 1$ as a distribution. This phenomenon, or more precisely, its analog for equidistribution in both position and momentum, is known as  \emph{quantum ergodicity}.

The question of whether all (rather than almost all) high energy eigenfunctions equidistribute in phase space has remained open. This property was christened \emph{quantum unique ergodicity} by \citet{rudnicksarnak94}, who conjectured that the Laplacian on any compact negatively curved manifold is quantum unique ergodic (QUE). 
Although the Rudnick--Sarnak conjecture is still open, it is now known that quantum unique ergodicity is not always valid, even if classical particles are chaotic; see \citet{faurenonnenmacher04}, \citet{faureetal03} and \citet{hassell10}. QUE has been verified in only a handful of cases; in particular for the Hecke orthonormal basis on an arithmetic surface by \citet{lindenstrauss06}, \citet{silbermanvenkatesh07} as well as for modular cusp forms on the modular surface \citet{holowinskysoundararajan10} and \citet{soundararajan10}.  \citet{anantharaman08} made partial progress towards the general Rudnick--Sarnak conjecture by showing that high energy Laplace eigenfunctions on compact negatively curved manifolds cannot concentrate very strongly. For example, they cannot concentrate on a single closed geodesic. For a more comprehensive survey of results on quantum unique ergodicity, see~\citet{sarnak11}. For more on quantum ergodicity and semiclassical chaos, see \citet{zelditch10}.

In spite of the availability of counterexamples to QUE, it is believed that QUE is generically valid for domains with ergodic billiard ball flow (see \citet{sarnak11}). In other words, QUE is expected to be true for almost all ergodic domains. There are at present no results like this.

The main result of this paper (Theorem~\ref{quethm}) says that for any Euclidean domain satisfying some very regularity conditions, there exists $S_\e:L^2\to L^2$ with $\|S_\e\|_{L^2\to L^2}\leq \e$ such that the perturbation of the Laplacian (with Dirichlet boundary condition) $-(I+S_\e)\Delta$ is self adjoint and has QUE eigenfunctions.  In other words, Dirichlet Laplacians lie in the closure (in the $H^2\to L^2$ norm topology) of the set of operators with QUE eigenfunctions. If we impose more regularity on the domain, then we can improve the regularity of $S_\e$. The required operator is constructed using a probabilistic method (described briefly in Section \ref{sec:outline}) and it is then shown that this random operator satisfies the required property with probability one. Notice that, although we show that Laplacians are close in the operator norm to QUE operators, this is very far from showing that one can perturb the domain to obtain a QUE Laplacian. Indeed, one should probably not expect such a result to hold for arbitrary domains. 

Our result is closely related to those in \citet{zelditch92,zelditch96,zelditch14}, \citet{maples13}  and \citet{chang15} where it is shown that certain unitary randomizations of eigenfunctions are quantum ergodic. In effect, this shows that $-U_k\Delta U_k^*$ is quantum ergodic for $U_k$ random unitary operator which mixes blocks of eigenfunctions. See Section \ref{sec:compare} for a more detailed comparison of the results.

\section{Results}\label{intro}
\subsection{Definitions}
We start by defining the class of domains to which our results apply. These domains may have boundaries which are quite rough and in particular include all domains where the solution of the Dirichlet problem has the property $u(x)\to 0$ as $x\to \partial\Omega$. 

Take any $d\ge 2$ and let $\om$ be a Borel subset of $\rr^d$. Let $B_t$ be a standard $d$-dimensional Brownian motion, started at some point $x\in \rr^d$. The exit time of $B_t$ from $\om$ is defined as
\begin{align}\label{exit}
\tau_\om := \inf\{t>0: B_t\not\in \om\}\,.
\end{align}
In this paper we will say that $\om$ is a \emph{regular domain} if it is nonempty, bounded, open, connected, and satisfies the following boundary regularity conditions:
\begin{enumerate}[(i)]
\item $\vol(\partial \om) = 0$, where $\partial \om$ is the boundary of $\om$ and $\vol$ denotes $d$ dimensional Lebesgue measure. 
\item For any $x\in \partial \om$, $\pp^x(\tau_\om = 0) = 1$, 
where $\pp^x$ denotes the law of Brownian motion started at $x$ and $\tau_\om$ is the exit time from $\om$. 
\end{enumerate}
Condition (ii) may look strange to someone who unfamiliar with probabilistic potential theory, but it is actually the well-known sharp condition for the existence of solutions to Dirichlet problems on $\om$~\cite[p.~225]{mortersperes10}. A useful sufficient condition for (ii) is that  every point on the boundary satisfies the so-called `Poincar\'e cone condition'~\cite[p.~68]{mortersperes10}. The cone condition stipulates that for every point $x\in \partial \Omega$, there is a cone based at $x$ whose interior lies outside $\Omega$ in a small neighborhood of $x$. Using the Poincar\'e cone condition, it is not difficult to verify that domains with $W^{2,\infty}$ boundaries, considered in \citet{gerardleichtnam93}, satisfy the condition~(ii). However, (i) and (ii) allow more general domains than those with $W^{2,\infty}$ boundary.  For example, any convex open set satisfies the cone condition, irrespective of the smoothness of the boundary. Various kinds of regions with corners, such as polygons, also satisfy the cone condition. 

An example of a domain that {\it does not} satisfy (ii) is the open unit disk in $\rr^2$ minus the interval $(0,1)$. More generally, domains with very sharp cusps at the boundary may not satisfy condition (ii) (see `Lebesgue's thorn' in~\cite[Section 8.4]{mortersperes10}). 

 Henceforth, we will assume that $\om$ is a regular domain and $\dbar$ will denote the closure of~$\om$.

Given any measurable function $f: \dbar\ra \mathbb{C}$, we denote by $\|f\|$  the $L^2(\om)$ norm of $f$. For such $f$ there is a natural probability measure associated with $f$ that has density $|f(x)|^2$ with respect to Lebesgue measure on $\dbar$. We will denote this measure as $\nu_f$. Note that in the definition of $\|f\|$ it does not matter whether we integrate over $\om$ or $\dbar$ since $\vol(\partial \om) = 0$. We will denote the $L^2$ inner product of two functions $f$ and $g$ by $\langle f,g\rangle$. 

Recall that  a sequence of probability measures $\{\mu_n\}_{n\ge 1}$ on $\dbar$ is said to converge weakly to a probability measure $\mu$ if 
\[
\lim_{n\ra\infty}\int_{\dbar} fd\mu_n = \int_{\dbar} fd\mu
\]
for every bounded continuous function $f:\dbar \ra\rr$. A probability measure that will be of particular importance in this paper is the uniform probability measure on $\dbar$. This is simply the restriction of Lebesgue measure to $\dbar$, normalized to have total mass one. 

\subsection{Defect Measures}

For every bounded  sequence of functions $\{f_n\}\in L^2(\re^d)$ with $f_n\underset{L^2}{\rightharpoonup} 0$, we can also associate a family of measures in phase space, $S^*\re^d$ (the cosphere bundle of $\re^d$), called \emph{defect measures}, defined as follows.  Recall the notation $\Psi^m(\re^d)$ for the pseudodifferential operators of order $m$ on $\re^d$, and $S^m_{\text{hom}}(T^*\re^d)$ for smooth functions on $T^*\re^d\setminus\{0\}$ homogeneous of degree $m$ in the fiber variable.   Let $S_{\text{phg}}^m(T^*\re^d)$ denote the associated polyhomogeneous symbol classes. That is, $a\in S^m_{phg}$ if there exists $a_j\in S^{m-j}_{\text{hom}}$ so that 
\begin{equation}
\label{e:expand}\left|\partial_x^\alpha\partial_{\xi}^\beta\left(a(x,\xi)-\sum_{j=0}^{N-1}a_j(x,\xi)\right)\right|\leq C_{\alpha\beta}(1+|\xi|^2)^{m-N-|\beta|},\quad |\xi|\geq 1.
\end{equation}
(See \cite{HOV3} for more details.)
We sometimes write $\Psi(\re^d)$, $S_{\text{hom}}(T^*\re^d)$, and $S_{phg}(T^*\re^d)$ for $\Psi^0(\re^d)$, $S^0_{\text{hom}}(T^*\re^d)$ and $S^0_{phg}(T^*\re^d)$ respectively. We also sometimes omit the $\re^d$ or $T^*\re^d$ when the relevant space is clear from context.

Let 
\[
\sigma:\Psi^m\to S^m_{\text{hom}}
\]
be the principal symbol map on $\Psi^m(\re^d)$. For $b\in S^m_{phg}$, we write $b(x,D)\in \Psi^m$ for a quantization of $b$ and observe that 
$$\sigma(b(x,D))=b_0(x,\xi)$$
where $b_0\in S^m_{\text{hom}}$ is the first term in the expansion \eqref{e:expand} for $b$.

Let $\chi \in \Cc(\re^d)$ have $\chi\equiv 1$ in a neighborhood of 0. For $a\in \Cc(S^*\re^d)$, let 
\[
\tilde{a}(x,\xi)=a(x,\xi/|\xi|)(1-\chi(\xi))\,.
\]
Then $\tilde{a}\in S_{phg}$.
Define the distribution $\mu_n\in \mc{D}'(S^*\re^d)$ by
\[
\mu_n(a)=\la \tilde{a}(x,D)f_n,f_n\ral
\]
where $\la \cdot,\cdot\ral$ denotes the inner product in $L^2(\re^d)$ and $D:=-i\partial$ is $-i$ times the gradient operator. Not that the weak convergence of $f_n$ to zero implies that for every subsequence of $\{\mu_n\}_{n\ge 1}$ there is a further subsequence that converges in the $\mc{D}'(S^*\re^d)$ topology.  Moreover, it can be shown that every limit point $\mu$ of $\{\mu_n\}_{n\ge 1}$ in the $\mc{D}'(S^*\re^d)$ topology is a positive radon measure, with the property that there exists a subsequence $\{f_{n_k}\}_{k\ge 1}$  so that for all $A\in \Psi(\re^d)$
\[
\la Af_{n_{k}},f_{n_{k}}\ral\to \int_{S^*\re^d}\sigma(A)d\mu\,.
\]
(See for example \cite{Burq} or \cite{gerard91}.) The set of such limit points $\mu$ is denoted by $\mc{M}(\{f_n\}_{n\ge 1})$ and is called the \emph{set of defect measures associated to the family $\{f_n\}_{n\ge 1}$}. We will  write $\mc{M}(f_n)$ instead of $\mc{M}(\{f_n\}_{n\ge 1})$ to simplify notation. Note that while $\mu_n$ depends on the choice of quantization procedure used to define $\tilde{a}(x,D)$ and the function $\chi$, the set $\mc{M}(f_n)$ is independent of such choices. 

\subsection{QUE operators}

If $H$ is a linear operator from a dense subspace of $L^2(\dbar)$ into $L^2(\dbar)$, we will say that a function $f$ belonging to the domain of $H$ is an eigenfunction of $H$ with eigenvalue $\lambda\in \mathbb{C}$ if $f\ne 0$ and $Hf = \lambda f$. We will say that an eigenfunction $f$ is normalized if $\|f\|=1$. 

\begin{defn}\label{quedef}
Let $H$ be a linear operator from some dense subspace of $L^2(\dbar)$ into $L^2(\dbar)$ having compact resolvent. We say that $H$ is QUE if for any sequence of normalized eigenfunctions $\{f_n\}_{n\ge 1}$ of $H$, 
\begin{equation}
\label{queeqn}
\mc{M}(1_{\dbar}f_n)= \left\{\frac{1}{\vol(\om)}1_{\dbar}dxdS(\xi)\right\}
\end{equation} where $S$ is the normalized surface measure on $S^{d-1}$.
\end{defn}
\noindent In particular, notice that if \eqref{queeqn} holds then for all $A\in \Psi^0(\re^d)$,
$$\la A1_{\dbar}f_n,1_{\dbar}f_n\ral\to \frac{1}{\vol(\om)}\int_{S^*\re^d}\sigma(A)1_{\dbar}dxdS(\xi)$$ 
and hence that $\nu_{f_n}$ converges weakly as a measure to the uniform probability distribution on $\om$. 
With this in mind, we define the weaker notion of equidistribution as follows.
\begin{defn}\label{quedef2}
Let $H$ be a linear operator from some dense subspace of $L^2(\dbar)$ into $L^2(\dbar)$ having compact resolvent. We say that $H$ is uniquely equidistributed if for $\{f_n\}_{n\ge 1}$ any sequence normalized eigenfunctions of $H$, $\nu_{f_n}$ converges weakly to the uniform probability distribution on $\om$.
\end{defn}
\subsection{The main result}
Let $-\Delta $ be the Dirichlet Laplacian on $\om$  with domain $\mfd$ (defined in Section \ref{prelim}). The following theorem is the main result of this paper. 
\begin{thm}\label{quethm}
Let $\om$ be a regular domain. Then for any $\ep>0$, there exists a linear operator $S_\ep:L^2(\dbar)\ra L^2(\dbar)$ such that: 
\begin{enumerate}
\item[\textup{(i)}] $\|S_\ep\|_{L^2\to L^2}\le \ep$.  
\item[\textup{(ii)}]  $-(I+S_\ep)\Delta$ is a positive operator on $L^2(\dbar)$ with domain $\mfd$
\item[\textup{(iii)}] $-(I+S_\ep)\Delta$ is QUE in the sense of Definition \ref{quedef}.
\end{enumerate}
If $\om$ has $C^\infty$ boundary, then for all $\gamma<1$, there exist such an $S_\ep:L^2(\dbar)\to H^\gamma(\dbar)$ with $\|S_\e\|_{L^2\to H^\gamma}\le \ep$. Moreover, if $\om$ has smooth boundary and the set of periodic billiards trajectories has measure zero (see Section \ref{billiards}), then this holds for $\gamma\leq 1$. 
\end{thm}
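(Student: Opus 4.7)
My plan is to construct $S_\ep$ by a block-randomization of the Dirichlet eigenbasis. Let $\{\phi_j\}_{j\ge 1}$ be an orthonormal basis of eigenfunctions of $-\Delta$ with eigenvalues $0<\lambda_1\le \lambda_2\le \cdots$. I would first partition the spectrum into consecutive blocks $B_k$ whose widths $w_k$ shrink relative to the typical eigenvalue in $B_k$, but grow fast enough that the multiplicity $N_k:=\#\{j:\lambda_j\in B_k\}$ tends to infinity. Write $V_k$ for the span of the eigenfunctions with $\lambda_j\in B_k$ and $\Pi_k$ for the orthogonal projection onto $V_k$. Independently across $k$, draw a Haar-distributed orthogonal transformation $U_k$ on $V_k$ and set $\psi_{k,i}:=U_k\phi_{k,i}$. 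I would then define $A_\ep$ by declaring the $\psi_{k,i}$ to be its eigenfunctions with eigenvalues $\mu_{k,i}=\bar\lambda_k+\delta_{k,i}$, where $\bar\lambda_k$ is a representative eigenvalue for the block and the $\delta_{k,i}$ are tiny distinct perturbations ensuring simplicity, and finally set $S_\ep:=A_\ep(-\Delta)^{-1}-I$. Since $S_\ep$ is block-diagonal in the $V_k$, its operator norm is of order $\max_k w_k/\min B_k$, which can be made at most $\ep$ by an appropriate choice of block structure; positivity and self-adjointness on $\mfd$ are inherited from the block-diagonal structure.

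The core task is to show that almost surely the random basis $\{\psi_{k,i}\}$ is QUE. For fixed $A\in \Psi^0(\re^d)$, the local Weyl law on $\om$ should give
\begin{equation*}
\frac{1}{N_k}\tr(\Pi_k A \Pi_k)\longrightarrow \frac{1}{\vol(\om)}\int_{S^*\re^d}\sigma(A)1_{\dbar}\,dx\,dS(\xi)
\end{equation*}
as $k\to\infty$. By Haar-invariance of $U_k$ and Lévy's concentration of measure on the orthogonal group of $V_k$, the individual diagonal entry $\la A\psi_{k,i},\psi_{k,i}\ral$, which up to cross-block errors controlled by ellipticity of $-\Delta$ equals the $(i,i)$-entry of $U_k^*(\Pi_k A \Pi_k)U_k$, concentrates around this average with Gaussian tails of variance $O(1/N_k)$. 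A Borel--Cantelli argument together with a union bound over a countable dense family of test symbols $A\in \Cc(S^*\re^d)$ would then deliver almost-sure equidistribution for every $\psi_{k,i}$, which is precisely \eqref{queeqn}.

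The main obstacle I anticipate is proving the local Weyl law on domains with only the probabilistic boundary regularity (i)--(ii), since classical microlocal proofs rely on FIO parametrices that demand a smooth boundary. The strategy I would pursue is to exploit the Feynman--Kac representation of the Dirichlet heat kernel in terms of Brownian motion killed at $\partial\om$, combined with short-time on-diagonal asymptotics for $e^{t\Delta}A$, and conclude via a Karamata-type Tauberian theorem; conditions (i)--(ii) are exactly what is needed so that the interior heat asymptotics are the expected ones and the boundary contributes no mass. A secondary difficulty is the refined statement $\|S_\ep\|_{L^2\to H^\gamma}\le \ep$: for $\gamma<1$ on $C^\infty$ domains I would obtain this from elliptic regularity of $(-\Delta)^{-1}$ together with the gain of $w_k/\lambda_k$ coming from the block construction, while the endpoint $\gamma\le 1$ under the non-periodic billiards hypothesis should come from sharper semiclassical resolvent or spectral cluster bounds available only in that generic regime.
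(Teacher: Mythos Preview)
Your proposal is correct and follows essentially the same route as the paper: block-randomize the Dirichlet eigenbasis by Haar rotations, perturb eigenvalues for simplicity, use concentration plus Borel--Cantelli over a countable family of symbols to get almost-sure QUE, and supply the local Weyl law on merely regular domains via the Feynman--Kac/heat-kernel comparison and a Tauberian argument. Two minor remarks: the paper uses the Hanson--Wright inequality rather than L\'evy concentration (both work here, since $q\mapsto q^T H q$ is $O(\|A\|)$-Lipschitz on $S^{N_k-1}$), and your ``cross-block errors'' are in fact zero because $\psi_{k,i}\in V_k$ gives $\langle A\psi_{k,i},\psi_{k,i}\rangle=\langle \Pi_k A\Pi_k\,\psi_{k,i},\psi_{k,i}\rangle$ exactly.
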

It would be interesting to see if a different version of this theorem can be proved, where instead of perturbing the Laplacian, it is the domain $\om$ that is perturbed. Alternatively, one can try to perturb the Laplacian by some explicit kernel rather than saying that `there exists $S_\ep$'. Yet another possible improvement would be to show that a generic perturbation, rather than a specific one, results in an operator with QUE eigenfunctions. Indeed, the proof of Theorem \ref{quethm} gets quite close to this goal. 

\subsection{Additional results}
The techniques of this paper yield the following version of the local Weyl law for regular domains.
\begin{thm}
\label{localWeyl}
Suppose that $\om\subset \re^d$ is a regular domain, where regularity is defined at the beginning of this section. Let $\{(u_j,\lambda_j^2)\}_{j\ge1}$ be a complete orthonormal basis of eigenfunctions of the Dirichlet Laplacian on  $\om$. Then for $A\in \Psi(\re^d)$ with $\sigma(A)$ supported in a compact subset of $\om$ and any $E>1$,
$$\sum_{\lambda_j\in [\lambda,\lambda E]}\la A1_{\dbar}u_j,1_{\dbar}u_j\ral =\frac{\lambda^d}{(2\pi )^d}\iint_{1\leq |\xi|\leq E}\sigma(A)1_{\dbar}dxd\xi+\o{}(\lambda^{d}).$$ 
\end{thm}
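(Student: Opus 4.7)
The plan is to deduce the window asymptotic from a cumulative local Weyl law: if one establishes
\[
N_A(\mu) := \sum_{\lambda_j \leq \mu} \la A\mathbf{1}_{\dbar} u_j, \mathbf{1}_{\dbar} u_j\ral = \frac{\mu^d}{d(2\pi)^d}\int_{S^*\re^d}\sigma(A)\mathbf{1}_{\dbar}\,dx\,dS(\omega) + \o{}(\mu^d),
\]
then the statement follows by computing $N_A(\lambda E) - N_A(\lambda)$ and converting the spherical integral back to the $\{1\leq |\xi|\leq E\}$ integral using homogeneity of $\sigma(A)$. Thus the task is to prove a leading-order Weyl law for $N_A$; the spectral window then falls out algebraically.

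The engine is a heat-trace asymptotic plus Karamata's Tauberian theorem. First, I would reduce to the case where the Schwartz kernel of $A$ is compactly supported in $\om_0 \times \om_0$ for some compact $\om_0 \subset \om$: picking $\chi\in C_c^\infty(\om)$ identically $1$ on the $x$-support of $\sigma(A)$ and replacing $A$ by $\chi A\chi$ introduces an error in $\Psi^{-1}$, whose contribution to $N_A(\mu)$ is $\O{}(\mu^{d-1}) = \o{}(\mu^d)$ (by induction on symbol order, using the eigenfunction bound $|\la B u_j,u_j\ral| \lesssim \lambda_j^{-1}$ together with the rough Weyl counting $\#\{\lambda_j\leq\mu\} = \O{}(\mu^d)$). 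For such a localized $A$,
\[
\tr(A e^{-tH_\om}) = \int\!\!\int K_A(x,y)\,p_t^\om(y,x)\,dy\,dx,
\]
where $H_\om = -\Delta$ with Dirichlet boundary conditions and $p_t^\om$ is the corresponding heat kernel. Using the Brownian representation
\[
p_t^\om(x,y) = p_t^{\re^d}(x,y) - \ee^x\!\left[p_{t-\tau_\om}^{\re^d}(B_{\tau_\om}, y);\, \tau_\om < t\right],
\]
and the Gaussian tail $\pp^x(\tau_\om < t) \leq C e^{-c\delta^2/t}$ for $x\in\om_0$ with $\mathrm{dist}(\om_0,\partial\om) \geq \delta$, one obtains $|p_t^\om(x,y) - p_t^{\re^d}(x,y)| = \O{}(e^{-c/t})$ uniformly on $\om_0\times\om_0$. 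Here condition (ii) is needed only to define the Dirichlet semigroup correctly; the exponential comparison itself depends only on the distance from $\om_0$ to $\partial\om$. Therefore
\[
\tr(A e^{-tH_\om}) = \tr(A e^{-tH_{\re^d}}) + \O{}(e^{-c/t}),
\]
and a direct Fourier computation for the free Laplacian gives
\[
\tr(A e^{-tH_{\re^d}}) = (2\pi)^{-d}\!\int\!\!\int a(x,\xi)e^{-t|\xi|^2}dx\,d\xi \sim c_A\, t^{-d/2},\quad c_A = \frac{\Gamma(d/2)}{2(2\pi)^d}\int_{S^*\re^d}\sigma(A)\mathbf{1}_{\dbar}\,dx\,dS(\omega).
\]

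To convert the heat asymptotic into a Weyl asymptotic for $N_A$, I would apply Karamata's Tauberian theorem after a positivity decomposition: writing $\sigma(A) = \sigma_+ - \sigma_-$ with $\sigma_\pm \in C_c^\infty(\om;[0,\infty))$ (e.g.\ by smoothing the positive and negative parts), then forming $A_\pm \geq 0$ via a partition-of-unity factorization $A_\pm = \sum_k \phi_k(x,D)^*\phi_k(x,D)$ with $\sum_k \phi_k^2 = \sigma_\pm$, one has $A = A_+ - A_- + R$ with $R \in \Psi^{-1}$. The measures $dN_{A_\pm}$ are nonnegative, Karamata applies to each, and subtracting gives the claimed asymptotic for $N_A$, hence the stated window formula. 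The main obstacle, and the reason this lemma is not merely classical, is the heat-kernel comparison step in a domain with potentially very rough boundary; this is where the Brownian exit-time machinery (condition (ii) and Gaussian exit bounds from the interior) replaces the usual smooth-boundary parametrix constructions, and where the argument becomes genuinely probabilistic rather than microlocal.
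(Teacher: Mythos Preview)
Your proposal is correct and mirrors the paper's proof: both arguments compare the Dirichlet and free heat kernels via the strong Markov property of Brownian motion, compute the free heat trace explicitly, and then invoke a Tauberian theorem after a positivity reduction (the paper adds a regularizing $C\in\Psi^{-1}$ so that $A+C\ge 0$ rather than splitting $\sigma(A)=\sigma_+-\sigma_-$, but this is cosmetic). The one technical point to watch is that the paper proves \emph{derivative} bounds $|\partial_x^\alpha(p_t^\om-p_t^{\re^d})|\le C t^{-d/2-|\alpha|}e^{-\delta^2/2t}$ (by differentiating the Brownian representation in the $y$-variable) and then uses Sobolev embedding to pass through $A\in\Psi^0$; your pointwise bound alone does not immediately give $\tr\bigl(A(e^{-tH_\om}-e^{-tH_{\re^d}})\bigr)=O(e^{-c/t})$, since the Schwartz kernel of $A$ is singular on the diagonal.
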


In order to state the next theorem, we need the following definition.
\begin{defn}\label{assume}
Let $C_0(S^*\om)$ be the set of continuous functions on $S^*\re^d$ that vanish on $(\re^d\setminus \om)\times S^{d-1}$ with the sup-norm topology. 
Let $\alpha:\re_+\to \re_+$ be nonincreasing. Let $\{(u_j,\lambda_j^2)\}_{j\ge 1}$ be a complete orthonormal basis of eigenfunctions of the Dirichlet Laplacian on $\om$. Suppose that there exists $\mc{A}\subset \Psi(\re^d)$ so that for all $A\in \mc{A}$, $\sigma(A)$ is supported compactly inside $S^*\om$, the set
\[
\sigma(\mc{A}):=\{\sigma(A)|_{S^*\re^d}: A\in \mc{A}\}
\]
is dense in $C_0(S^*\om)$, and for each $A\in\mc{A}\subset  \Psi(\re^d)$,
\begin{align*}
\sum_{\lambda_j\in [\lambda,\lambda(1+\alpha(\lambda))]}&\la A1_{\dbar}u_j, 1_{\dbar}u_j\ral \\
&=\frac{\lambda^d}{(2\pi )^d}\iint_{1\leq |\xi|\leq 1+\alpha(\lambda)}\sigma(A)1_{\dbar}dxd\xi+\o{}(\alpha(\lambda)\lambda^{d}).
\end{align*}
Then we say that the domain $\om$ is \emph{average quantum ergodic (AQE) at scale~$\alpha$}. 
\end{defn}
Theorem \ref{localWeyl} implies that regular domains $\om$ are AQE at scale $E$ for any $E>0$. In Section~\ref{sec:localWeyl}, we recall Weyl laws holding on domains with $C^\infty$ boundaries which imply that these domains are AQE at scale $\alpha(\lambda)=\lambda^{-\gamma}$ for some $\gamma>0$. For $\gamma\in [0,2]$, let $\mfd^\gamma$ denote the complex interpolation space $(L^2(\om),\mfd)_{\gamma/2}$. Then the following theorem implies Theorem \ref{quethm}.
\begin{thm}\label{quethm2}
Suppose that $\om$ is a regular domain that is AQE at scale $\alpha(\lambda)=\lambda ^{-\gamma}$ for some $1\geq \gamma\geq 0$. Then for any $\ep>0$, there exists a linear operator $S_\ep:L^2(\dbar)\ra \mfd^\gamma$ such that: 
\begin{enumerate}
\item[\textup{(i)}] $\|S_\ep\|_{L^2\to \mfd^\gamma}\le \ep$.  
\item[\textup{(ii)}]  $-(I+S_\ep)\Delta$ is a positive operator on $\mfd$ with compact resolvent
\item[\textup{(iii)}] $-(I+S_\ep)\Delta$ is QUE in the sense of Definition \ref{quedef}.
\end{enumerate}
\end{thm}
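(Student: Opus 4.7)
The plan is to construct a random self-adjoint operator $\tilde H$ with $\tilde Hv_j=\tilde\lambda_j^2v_j$ for an orthonormal basis $\{v_j\}_{j\geq 1}$ obtained by Haar-random mixing of Dirichlet eigenfunctions within short spectral windows, and then to set $S_\ep:=\tilde H(-\Delta)^{-1}-I$ so that $-(I+S_\ep)\Delta=\tilde H$. Given the orthonormal Dirichlet eigenbasis $\{(u_j,\lambda_j^2)\}_{j\geq 1}$, I would partition the high-frequency indices into windows $I_k=\{j:\lambda_j\in[\mu_k,\mu_k(1+\alpha(\mu_k)))\}$ prescribed by the AQE scale, and on each $V_k:=\text{span}\{u_j:j\in I_k\}$ sample an independent Haar unitary $U_k\in U(V_k)$ and set $v_j:=U_ku_j$. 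Distinct new eigenvalues $\tilde\lambda_j^2$ are chosen inside the window so that $\tilde H$ has simple spectrum, and when needed to meet the $\mfd^\gamma$ target in~(i), each block is further subdivided into $O(1/\ep)$ sub-blocks on which the Haar mixing is performed.

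To verify~(i) and~(ii), note that because $\tilde H$ and $-\Delta$ both preserve $V_k$, so does $S_\ep$, and a direct commutator estimate on $V_k$ using $\|U_kD_kU_k^*-D_k\|\lesssim\alpha(\mu_k)\mu_k^2$ (with $D_k:=-\Delta|_{V_k}$) yields $\|S_\ep|_{V_k}\|_{L^2\to L^2}\lesssim\alpha(\mu_k)$. Combined with the Sobolev-type bound $\|w\|_{\mfd^\gamma}\lesssim\mu_k^\gamma\|w\|_{L^2}$ for $w\in V_k$ and the identity $\alpha(\mu_k)\mu_k^\gamma=1$, the sub-blocking brings $\|S_\ep\|_{L^2\to\mfd^\gamma}\le\ep$, giving~(i). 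The spectral definition of $\tilde H$ with positive eigenvalues $\tilde\lambda_j^2\to\infty$ gives positivity, self-adjointness, and compact resolvent, while the comparability $\tilde\lambda_j^2\asymp\lambda_j^2$ within each block shows $\text{Dom}(\tilde H)=\mfd$, yielding~(ii).

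The substantive step is~(iii). For $A\in\mc{A}$ and $j\in I_k$, Haar invariance of $U_k$ on $V_k$ yields
$$\ee\la A1_\dbar v_j,1_\dbar v_j\ral=\frac{1}{|I_k|}\sum_{j'\in I_k}\la A1_\dbar u_{j'},1_\dbar u_{j'}\ral,$$
which by the AQE hypothesis and Weyl's law tends to $\vol(\om)^{-1}\int_{S^*\re^d}\sigma(A)1_\dbar dxdS(\xi)$ as $k\to\infty$. The map $U\mapsto\la A1_\dbar Uu_j,1_\dbar Uu_j\ral$ is $2\|A\|$-Lipschitz on $U(V_k)$ in the Hilbert--Schmidt metric, so Gromov's concentration of measure on the unitary group delivers
$$\pp\bigl(\bigl|\la A1_\dbar v_j,1_\dbar v_j\ral-\ee\la A1_\dbar v_j,1_\dbar v_j\ral\bigr|>\delta\bigr)\leq 2\exp\bigl(-c|I_k|\delta^2/\|A\|^2\bigr),$$
and the polynomial growth $|I_k|\sim c_d\alpha(\mu_k)\mu_k^d\to\infty$ makes the sum over all $j$ convergent. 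Borel--Cantelli then yields almost-sure convergence $\la A1_\dbar v_j,1_\dbar v_j\ral\to\vol(\om)^{-1}\int\sigma(A)1_\dbar dxdS$ for each fixed $A$; a countable intersection over a dense sequence in $\sigma(\mc{A})\subset C_0(S^*\om)$, together with an approximation argument controlling the mass of $|v_j|^2$ near $\partial\om$ (via Theorem~\ref{localWeyl}), extends this to every $A\in\Psi^0(\re^d)$, giving~\eqref{queeqn}.

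The main obstacle is reconciling the tension between (a) making each $V_k$ large enough that the Haar mixing, together with AQE, produces the correct Liouville limit, and (b) making the commutator-type perturbation small enough to give $\|S_\ep\|_{L^2\to\mfd^\gamma}\le\ep$; since the identity $\alpha(\mu_k)\mu_k^\gamma=1$ leaves no slack at the block level, the delicate point is designing the sub-block refinement so that the AQE averaging is inherited by each sub-block. A secondary issue is the extension from $\mc{A}$ to all of $\Psi^0(\re^d)$ in the presence of rough boundary, which should be handled using the local Weyl law together with the density of $\sigma(\mc{A})$ in $C_0(S^*\om)$.
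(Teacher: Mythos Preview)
Your proposal is correct and follows the same architecture as the paper: partition the spectrum into short windows at (a constant multiple of) the AQE scale, perform independent Haar-random rotations of the eigenfunctions within each window, assign new simple eigenvalues inside the window, set $S_\ep=\tilde H(-\Delta)^{-1}-I$, and run Borel--Cantelli over a countable dense family of symbols in $C_0(S^*\om)$. The two-scale refinement you describe to gain the factor $\ep$ in the $L^2\to\mfd^\gamma$ bound is exactly the paper's partition into the sets $L_{n,j}$, and the ``delicate point'' you flag---that the sub-blocks have width $\ep\lambda^{-\gamma}$ rather than $\lambda^{-\gamma}$---is handled identically in the paper (it uses that for fixed $\ep$ the windows are still comparable to the AQE scale).

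The one substantive difference is the concentration step. You invoke Gromov--Milman concentration on $U(V_k)$ for the $2\|A\|$-Lipschitz map $U\mapsto\langle A1_{\dbar}Uu_j,1_{\dbar}Uu_j\rangle$; the paper instead works with Haar-orthogonal matrices, Gaussianizes a single row via $w=rq$ with $r\sim\chi_n$ independent of $q$, and applies the Hanson--Wright inequality to the quadratic form $\sum_{j,k}\langle Au_j,u_k\rangle w_jw_k$, using $\|\Pi A\Pi\|_{\mathrm{HS}}\le\|A\|\sqrt{n}$. Both routes deliver the same sub-Gaussian tail $\exp(-c|I_k|\delta^2/\|A\|^2)$ needed for Borel--Cantelli; your argument is somewhat more direct, while the paper's makes the linear-algebraic structure explicit. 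The paper also uses the orthogonal group (natural since the $u_j$ can be taken real), but this is cosmetic. Finally, the boundary-mass issue you raise is dispatched in the paper without a separate local Weyl input: once the defect measure is identified on $S^*\om$ and seen to have total mass $\vol(\om)=1$ there, no mass can sit on $S^*\partial\om$.
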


A consequence of Theorem \ref{quethm} is that $-\Delta$ has a sequence of `quasimodes' that are equidistributed in the limit. Moreover, when $\om$ is AQE at some scale $\alpha(\lambda)=\o{}(1)$, then there is a full orthonormal basis of (slightly weaker) quasimodes that are QUE. This is the content of the following corollary.

\begin{cor}\label{almostcor}
Let all notation be as in Theorem \ref{quethm}. Suppose that $\om$ is AQE at scale $\alpha(\lambda)=\lambda^{-\gamma}$ for some $\gamma\geq 0$. Then
\begin{itemize}
\item[(i)] there is a sequence of functions $\{f_n\}_{n\ge 1}$ belonging to $\mfd$ and a sequences of positive real numbers $\{\alpha_n\}_{n\ge 1}$, such that $\|f_n\|=1$, $\alpha_n \ra\infty$ and
$
(-\alpha_n^{-2} \Delta -1) f_n=\o{L^2}(\alpha_n^{-\gamma}) 
$
and
$$\mc{M}(f_n)=\left\{\frac{1}{\vol(\om)}1_{\dbar}dxd\sigma(\xi)\right\}.$$
\item[(ii)]  there is an orthonormal basis of $L^2(\dbar)$, $\{g_n\}_{n\ge 1}$ belonging to $\mfd$ and a sequences of positive real numbers $\{\beta_n\}_{n\ge 1}$, such that $\beta_n \ra\infty$ and
$
(-\beta_n^{-2} \Delta -1) g_n=\O{L^2}(\beta_n^{-\gamma}).
$
and $$\mc{M}(g_n)=\left\{\frac{1}{\vol(\om)}1_{\dbar}dxd\sigma(\xi)\right\}.$$
\end{itemize}
\end{cor}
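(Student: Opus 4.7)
My plan is to derive both parts by extracting suitable eigenfunctions from the QUE operators furnished by Theorem~\ref{quethm2}, driven by a single quasimode estimate: for any normalized eigenfunction $g\in\mfd$ of $-(I+S_\ep)\Delta$ with eigenvalue $\mu=\beta^2$, rearranging the eigenvalue equation yields $(-\Delta-\mu)g = S_\ep\Delta g$, and I would show
\begin{equation*}
\bigl\|(-\beta^{-2}\Delta - 1)g\bigr\|_{L^2}\;\leq\;C\,\ep\,\beta^{-\gamma}.
\end{equation*}
To establish this I would dualize $\|S_\ep\|_{L^2\to\mfd^\gamma}\leq\ep$ to $\|S_\ep\|_{\mfd^{-\gamma}\to L^2}\leq\ep$ (a step I expect to follow from the symmetry structure of $S_\ep$ underlying the self-adjointness of $-(I+S_\ep)\Delta$ on $L^2$), and combine it with the standard interpolation chain
\begin{equation*}
\|\Delta g\|_{\mfd^{-\gamma}}\leq C\|g\|_{\mfd^{2-\gamma}}\leq C\|g\|_{L^2}^{\gamma/2}\|g\|_{\mfd}^{1-\gamma/2}\leq C\beta^{2-\gamma},
\end{equation*}
using the bound $\|g\|_{\mfd}\leq C\mu$ that is immediate from the eigenvalue equation.

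For part (i) I would apply Theorem~\ref{quethm2} along a sequence $\ep_k\to 0$ to produce QUE operators $-(I+S_{\ep_k})\Delta$, each with normalized eigenfunctions $\{f_j^{(k)}\}_{j\geq 1}$ whose defect measures all equal the uniform one. Fix a countable $\mc{A}\subset\Psi(\re^d)$ with $\sigma(\mc{A})$ dense in $C_0(S^*\om)$, and by a standard diagonal extraction pick indices $j_k$ with $\alpha_k:=\sqrt{\mu_{j_k}^{(k)}}\to\infty$ and
\[
\Bigl|\la A 1_{\dbar}f_{j_k}^{(k)},1_{\dbar}f_{j_k}^{(k)}\ral - \tfrac{1}{\vol(\om)}\int_{S^*\re^d}\sigma(A)1_{\dbar}\,dx\,dS(\xi)\Bigr|<\tfrac{1}{k}
\]
for each of the first $k$ elements $A$ of $\mc{A}$. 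Setting $f_k:=f_{j_k}^{(k)}$, density of $\sigma(\mc{A})$ forces $\mc{M}(f_k)=\{(\vol(\om))^{-1}1_{\dbar}\,dx\,dS(\xi)\}$, while the quasimode estimate gives $(-\alpha_k^{-2}\Delta-1)f_k=\O{L^2}(\ep_k\alpha_k^{-\gamma})=\o{L^2}(\alpha_k^{-\gamma})$ because $\ep_k\to 0$.

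For part (ii) I would apply Theorem~\ref{quethm2} with a single fixed small $\ep>0$. The positivity of $-(I+S_\ep)\Delta$ on $\mfd$ together with its compact resolvent produces an $L^2$-orthonormal basis $\{g_n\}\subset\mfd$ of eigenfunctions with eigenvalues $\mu_n=\beta_n^2\to\infty$; QUE supplies the uniform defect measure directly, and the quasimode estimate applied with $\beta=\beta_n$ yields $(-\beta_n^{-2}\Delta-1)g_n=\O{L^2}(\beta_n^{-\gamma})$ uniformly in $n$.

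The main obstacle is the quasimode estimate with the correct $\beta^{-\gamma}$ rate: using only $\|S_\ep\|_{L^2\to L^2}\leq\ep$ one obtains merely $\O{L^2}(\ep)$ with no decay in $\beta$, so capturing the $\beta^{-\gamma}$ improvement genuinely requires the $\mfd^\gamma$ regularity of $S_\ep$ through the duality step. That duality in turn must be reconciled with the precise form of self-adjointness of $-(I+S_\ep)\Delta$ produced by the construction behind Theorem~\ref{quethm2}, which is a subtle structural point one has to verify rather than assume formally.
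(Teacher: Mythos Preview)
Your overall architecture matches the paper: part~(ii) comes from a single application of Theorem~\ref{quethm2} with fixed $\ep$, while part~(i) comes from letting $\ep_k\to 0$ and diagonally extracting. The diagonal extraction you spell out for the defect-measure convergence is in fact more careful than what the paper writes. The substantive point is, as you correctly flag, the quasimode estimate with the $\beta^{-\gamma}$ rate.

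The gap is in your proposed duality step. The operator $S_\ep$ is \emph{not} self-adjoint: in the construction one has $S=(T''-T)G$, and since $T''$ is diagonal in the rotated basis $\Phi'$ while $G$ is diagonal in the original basis $\Phi$, the factors do not commute, so $S^*=G(T''-T)\ne S$. Self-adjointness of $-(I+S)\Delta$ yields only the intertwining relation $S\Delta=\Delta S^*$ on $\mfd$, and this does not let you convert the hypothesis $\|S\|_{L^2\to\mfd^\gamma}\le\ep$ into $\|S\|_{\mfd^{-\gamma}\to L^2}\le\ep$; chasing the intertwining relation just moves the missing regularity around. So the justification you propose cannot close.

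The paper avoids the operator-norm route entirely and instead bounds the \emph{vector} $\|S f_n\|_{L^2}$ for the specific eigenfunction $f_n$, using a structural fact from the construction: each eigenfunction $f_n$ of $-(I+S)\Delta$ lies in $\operatorname{span}\{u_i:i\in I_l\}$ for a single block $I_l$ with $\lambda_i\sim\alpha_n$, and $Sf_n$ stays in that same span. On such spectrally localized functions the $\mfd^\gamma$ and $L^2$ norms differ by the fixed factor $\sim\alpha_n^{\gamma}$, so the mapping bound $\|S\|_{L^2\to\mfd^\gamma}\le\ep$ immediately upgrades to $\|Sf_n\|_{L^2}\le C\ep\,\alpha_n^{-\gamma}$ (this is equation~\eqref{perturbEst}). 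Feeding this into $(-\Delta-\alpha_n^2)f_n=-\alpha_n^2(I+S)^{-1}Sf_n$ gives the estimate. Incidentally, your desired operator bound $\|S\|_{\mfd^{-\gamma}\to L^2}\le C\ep$ \emph{is} true, but it too comes from reopening the construction and reapplying Lemma~\ref{compare1} at a shifted Sobolev index, not from any abstract symmetry of~$S$.
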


\begin{remark}
Note that up to this point, all results apply equally well to compact manifolds with or without boundary, but we chose to present them for the case of $\om\Subset \re^d$ for concreteness.
\end{remark}

\subsection{Improvements on compact manifolds without boundary}
Together with the analog of Theorem \ref{quethm}, a stronger version of the Weyl law valid on compact manifolds without boundary (see Section \ref{sec:localWeyl}), implies the following corollary.
\begin{cor}\label{almostcor2}
Let $(M,g)$ be a compact Riemannian manifold without boundary so that the set of closed geodesics has measure 0. Then there is an orthonormal basis of $L^2(\dbar)$, $\{f_n\}_{n\ge 1}$, belonging to $C^\infty(M)$ and a sequence of positive real numbers $\{\alpha_n\}_{n\ge 1}$ such that $\alpha_n \ra\infty$, 
\[
(-\alpha_n^{-2} \Delta_g -1)f_n =\o{L^2}(\alpha_n^{-1})\, ,
\]
and $\nu_{f_n}\to \frac{1}{\vol(M)}dx$. That is, $f_n$ are uniquely equidistributed in the sense of Definition \ref{quedef2}.
\end{cor}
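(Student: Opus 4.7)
The plan is to combine the Duistermaat--Guillemin sharp Weyl law on manifolds without boundary with the probabilistic construction underlying Theorem~\ref{quethm2} and part (ii) of Corollary~\ref{almostcor}, applied at a subpolynomial spectral scale. Under the measure-zero closed geodesics hypothesis, the improved Weyl law recalled in Section~\ref{sec:localWeyl} gives
\[
\sum_{\lambda_j\leq \lambda}\la A u_j,u_j\ral = \frac{\lambda^d}{(2\pi)^d}\iint_{|\xi|\leq 1}\sigma(A)\,dx\,d\xi + \o{}(\lambda^{d-1})
\]
for every $A\in\Psi(\re^d)$ with $\sigma(A)$ compactly supported in $S^*M$. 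Subtracting this identity at $\lambda$ and at $\lambda(1+\alpha(\lambda))$ yields the AQE condition of Definition~\ref{assume} at scale $\alpha(\lambda)=\lambda^{-1}$, matching $\gamma=1$ in Corollary~\ref{almostcor}.

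A direct application of Corollary~\ref{almostcor}(ii) with $\gamma=1$ already gives an orthonormal basis with quasimode remainder $\O{L^2}(\alpha_n^{-1})$ that is QUE, but the strictly stronger remainder $\o{L^2}(\alpha_n^{-1})$ asked for here is obtained by refining the AQE scale. Because the Duistermaat--Guillemin remainder is $\o{}(\lambda^{d-1})$ with no prescribed rate, a standard diagonal argument produces a nonincreasing function $\omega:\re_+\to\re_+$ with $\omega(\lambda)\to 0$ and $\omega(\lambda)\lambda^{d-1}\to\infty$, such that the remainder above is $\o{}(\omega(\lambda)\lambda^{d-1})$. Setting $\tilde\alpha(\lambda)=\omega(\lambda)\lambda^{-1}$, the manifold $M$ is then AQE at scale $\tilde\alpha$, strictly finer than $\lambda^{-1}$.

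I then invoke the manifold-without-boundary analogue of Theorem~\ref{quethm2} and Corollary~\ref{almostcor}(ii) at the AQE scale $\tilde\alpha$. The polynomial form $\alpha(\lambda)=\lambda^{-\gamma}$ enters the proofs of those statements only through two quantitative features, namely that the spectral cluster dimensions diverge and that a union bound over a countable dense family of test symbols converges; since $\omega$ can be taken to decay as slowly as needed, both remain valid for $\tilde\alpha$. The random-unitary construction then produces an orthonormal basis $\{f_n\}_{n\ge 1}\subset C^\infty(M)$ with each $f_n$ spectrally supported in $[\alpha_n,\alpha_n(1+\tilde\alpha(\alpha_n))]$, hence
\[
(-\alpha_n^{-2}\Delta_g - 1)f_n = \O{L^2}(\tilde\alpha(\alpha_n)) = \O{L^2}(\omega(\alpha_n)/\alpha_n) = \o{L^2}(\alpha_n^{-1}),
\]
and the basis is QUE, which implies the required unique equidistribution $\nu_{f_n}\to\vol(M)^{-1}\,dx$ in the sense of Definition~\ref{quedef2}.

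The main obstacle is to choose $\omega$ so that $\omega(\lambda)\to 0$ holds simultaneously with the Weyl remainder being $\o{}(\omega(\lambda)\lambda^{d-1})$ and with the cluster dimensions $\sim \omega(\lambda)\lambda^{d-1}$ remaining large enough to drive the concentration-of-measure step in the random-unitary construction. Since the Duistermaat--Guillemin remainder is given without an explicit rate, these requirements must be balanced in a single diagonal extraction; fortunately $d\geq 2$ provides $\lambda^{d-1}\to\infty$ polynomially, giving ample slack so that the obstacle is technical rather than substantive. The only genuinely new piece of work beyond Theorem~\ref{quethm2} is to confirm that its proof, written for polynomial AQE scales, applies verbatim to the slowly decaying $\tilde\alpha$ produced above.
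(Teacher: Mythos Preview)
Your approach is correct for the stated corollary but follows a genuinely different route from the paper's. The paper's proof rests on Theorem~\ref{thm:weylStrong}, the \emph{pointwise} Weyl law with remainder $o(\lambda^{d-1})$ uniform in $x\in M$; this uniformity lets the authors choose a single sequence $\beta_n\to\infty$ and then verify the cluster-averaged limit simultaneously for all multiplication operators $f$ with $\|f\|_{L^\infty}\le 1$, yielding equidistribution in position only. You instead invoke the microlocal (pseudodifferential) form of the Duistermaat--Guillemin law and extract a single subpolynomial scale $\tilde\alpha(\lambda)=\omega(\lambda)\lambda^{-1}$ via a diagonal argument over a \emph{countable} dense family $\mathcal A\subset\Psi^0$, then rerun the construction behind Theorem~\ref{quethm2}. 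Two remarks are worth making. First, the microlocal $o(\lambda^{d-1})$ Weyl law on closed manifolds is not actually stated in Section~\ref{sec:localWeyl} (only the pointwise version is), so you are importing an external, albeit standard, input. Second, your route in fact yields QUE rather than mere equidistribution, which is strictly more than Corollary~\ref{almostcor2} asserts and more than the paper claims to obtain: the authors explicitly remark that they were unable to upgrade to QUE because the Weyl remainder for $\langle Au_j,u_j\rangle$ depends on derivatives of $\sigma(A)$, precluding uniformity in $A$. Your observation that uniformity is unnecessary---one only needs the AQE condition operator-by-operator over a countable family, and a diagonal extraction furnishes a common $\omega$---appears to dissolve that obstruction. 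The trade-off is that the paper's argument is entirely self-contained from Theorem~\ref{thm:weylStrong}, while yours leans on the microlocal Weyl asymptotics and on checking (as you note) that the Borel--Cantelli and cluster-size estimates in the proof of Theorem~\ref{quethm2} go through for the non-polynomial scale $\tilde\alpha$; both checks are routine since $|I_l|\sim\omega(\lambda)\lambda^{d-1}\to\infty$ and $d\ge 2$.
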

Unfortunately, the authors were unable to prove a version of Corollary \ref{almostcor2} where the basis of quasimodes is QUE rather than uniquely distributed. This is because the remainder in the strong version of the local Weyl law (see Theorem \ref{thm:weylStrong}) may depend on derivatives of the symbol in $\xi$.

\begin{remark}
Notice that if $M$ has ergodic geodesic flow, then the set of periodic geodesics has measure zero and hence Corollary \ref{almostcor2} applies and there is an orthonormal basis of $\o{L^2}(\alpha^{-1})$ quasimodes that are equidistributed. In particular, being $\o{L^2}(\alpha^{-1})$ quasimodes implies that these functions respect the dynamics at the level of defect measures, that is, defect measures associated to the family of quasimodes are invariant under the geodesic flow. See \cite{Burq} or \cite[Chapter 5]{EZB} for details.
\end{remark}

\subsection{Comparison with previous results}
\label{sec:compare}
One can view the results here as a companion to those in \cite{zelditch92,zelditch96,zelditch14} and \cite{maples13}. In these papers, the authors work on a compact manifold $M$ and fix a basis of eigenfunctions of the Laplacian, $\{u_n\}_{n=1}^\infty$. Their results then show that for almost every block diagonal (in the orthonormal basis $u_n$) unitary operator 
$$U=\oplus_{k=1}^\infty U_k$$
(with respect to the product Haar measure) 
such that for all $k$, $ \dim \ran U_k<\infty$ and $\dim \ran U_k\to \infty$ at least polynomially in $k$, the basis $\{Uu_n\}_{n=1}^\infty$ has 
$$\mc{M}(Uu_n)=\left\{\frac{1}{\vol(M)}dxdS(\xi)\right\}.$$

One reformulation considers a certain basis of eigenfunctions for the operator $-U\Delta U^*$. By taking $U_k$ close to the identity, we may write 
$$\tilde{P}:=-U\Delta U^*=-(I+\tilde{S})\Delta$$ 
where $\tilde{S}$ is small in $L^2\to L^2$ norm. However, $\tilde{P}$ may not be QUE if there is high multiplicity in the spectrum of $-\Delta$.

One can think of the results in the present paper as replacing the $U_k$ by some nearly unitary operator. By choosing these operators carefully, and employing the Hanson--Wright inequality in place of the law of large numbers, we are able to use smaller windows than those in previous work. This allows us to prove that the perturbation is regularizing under various conditions, and to show that the resulting operator is QUE.

\subsection{Outline of the proof and organization of the paper}
\label{sec:outline}
In order to prove Theorem \ref{quethm}, we show that a local Weyl law with a certain window implies the existence of the desired perturbation $S_\e$. The local Weyl law essentially says that when averaged over a certain size window, say $\lambda^{-\gamma}$, the matrix elements $\langle Au_k,u_k\rangle$ behave as though the eigenfunctions were uniquely ergodic. In Section \ref{rotationsec} we give a rigorous meaning to this statement. In particular, we use a modern version of the Hanson--Wright inequality from \cite{rudelsonvershynin13} (see \cite{hansonwright71} for the original) to show that random rotations (with respect to Haar measure) of small groups of eigenfunctions are uniquely ergodic. Here, the size of the group allowed depends on the remainder in the local Weyl law. Thus, the smaller the remainder, the smaller the required group of eigenfunctions.

In Section \ref{last}, we obtain the perturbation, $S_\e$. In order to do this, we make a two scale partition of the eigenvalues, $\lambda_i^2$, of the Laplacian. In particular, we divide the eigenvalues of the Laplacian into 
\begin{align*} L_{n,j}&:=\left\{\lambda_i: \left(1+\frac{\e j}{\lceil(1+\e)^{n\gamma}\rceil}\right) \leq \frac{\lambda_{i}}{(1+\e)^n}<\left(1+\frac{\e (j+1)}{\lceil(1+\e)^{n\gamma}\rceil}\right)\right\},\\
 &\qquad \qquad 0\leq n,\quad 0\leq j\leq \lceil(1+\e)^{n\gamma}\rceil-1
 \end{align*}
where $\gamma$ is determined by the remainder in the local Weyl law.
For each $L_{n,j}$ we then make a random rotation of the corresponding eigenfunctions and perturb the eigenvalues, $\lambda_i\to \lambda'_i$ so that each new eigenvalue, $\lambda_i'$ is simple and lies in $L_{n,j}$.

 Because of the fact that random rotations of eigenfunctions on the scale $\lambda^{-\gamma}$ are QUE, this results in an operator that is almost surely QUE. The regularizing nature of the perturbation results from the second scale in $L_{n,j}$. That is, the fact the eigenfunctions with eigenvalue similar to $(1+\e)^n$ are mixed only with those whose eigenvalues are at a distance $(1+\e)^{n(1-\gamma)}\e$. In particular, the larger $\gamma$, the more regularizing the perturbation. 
 
 In order to prove Theorem \ref{quethm},  we need to prove the local Weyl law for regular domains (Theorem \ref{localWeyl}), but we postpone this proof until Appendix \ref{ptwisesec}. The key ingredient here is to compare the heat trace for the Dirichlet Laplacian on $\Omega$ with the heat trace for the Laplacian on $\re^d$ as in \cite{gerardleichtnam93, dodziuk81}. Let $k(t,x,y)$ and $k_D(t,x,y)$ be respectively the kernels of $e^{t\Delta}$ and $e^{t\Delta_D}$, where $\Delta$ is the free Laplacian and $\Delta_D$ the Dirichlet Laplacian. The key estimate in proving Theorem \ref{localWeyl} is 
$$|\partial_x^\alpha(k(t,x,y)-k_D(t,x,y))|\leq C_\delta t^{-N_\alpha}e^{-c_\delta/t},\quad\quad d(x,\partial\Omega)>\delta.$$
\begin{remark}
The work of \citet{LiStroh} extends this type of estimate to the heat kernel for general self-adjoint extensions of the Laplacian and gives explicit constants independently of the extension.
\end{remark}

We prove this estimate using the relationship between killed Brownian motion on $\Omega$ with the Dirichlet heat Laplacian together with the fact that Brownian motion has independent increments. Because of this approach, we are able to complete the proof on domains which are only regular.

The paper is organized as follows. Section \ref{sec:prelim} recalls local Weyl laws valid for domains with smooth boundary and manifolds without boundary, the functional analytic definition of the Dirichlet Laplacian, and some geometric preliminaries. Section \ref{rotationsec} presents the results on random rotations of eigenfunctions. Section \ref{last} finishes the proof of Theorems \ref{quethm}, \ref{quethm2} and Corollary \ref{almostcor}.  Section~\ref{last2} contains the adjustments necessary to obtain the improvements on manifolds without boundary, in particular proving Corollary~\ref{almostcor2}. Finally, Appendix \ref{ptwisesec} contains the proof of Theorem \ref{localWeyl}.

\section{Preliminaries}
\label{sec:prelim}
Throughout the paper, we will adopt the notation that $C$ denotes any positive constant that may depend only on the set $\om$, the dimension $d$, and nothing else. The value of $C$ may change from line to line. In case we need to deal with multiple constants, they will be denoted by $C_1,C_2,\ldots$. From this point forward we will assume that 
\[
\vol(\om) = 1\,. 
\]
This does not result in any loss of generality since we may always rescale $\om$ with positive volume to have unit volume. 
\subsection{Local Weyl Laws}
\label{sec:localWeyl}
We first recall some now classical local Weyl laws for domains $\om$ more regular than those in Theorem \ref{localWeyl}.  In this setting, we have the following version of the local Weyl law~\cite{DuiGui, safarovVassiliev}.
\begin{thm}
Suppose that $\om$ has $C^\infty$ boundary. Let $\{(u_j,\lambda_j^2)\}_{j\ge 1}$ be a complete orthonormal basis of eigenfunctions of the (Dirichlet) Laplacian on $\om$. Then for $A\in \Psi(\re^d)$ with $A$ having kernel supported in a compact subset of $\om\times \om$,  
$$\sum_{\lambda_j\in [\lambda,\lambda E]}\la A1_{\dbar}u_j,1_{\dbar}u_j\ral =\frac{\lambda^d}{(2\pi )^d}\iint_{1\leq |\xi|\leq E}\sigma(A)1_{\dbar}dxd\xi+\O{}(\lambda^{d-1}).$$
In particular, $\om$ is AQE at scale $\lambda^{-\gamma}$ for any $\gamma<1$.
Moreover if the set of closed trajectories for the billiard flow (see Section~\ref{billiards}) has measure zero, then
\begin{align*}
\sum_{\lambda_j\in [\lambda,\lambda(1+\lambda^{-1})]}&\la A1_{\dbar}u_j,1_{\dbar}u_j\ral \\
&=\frac{\lambda^d}{(2\pi )^d}\iint_{1\leq |\xi|\leq 1+\lambda^{-1}}\sigma(A)1_{\dbar}dxd\xi+\o{}(\lambda^{d-1}).
\end{align*}
In particular, $\om$ is AQE at scale $\lambda^{-1}$.
\end{thm}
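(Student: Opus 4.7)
The plan is to follow the classical Hörmander--Duistermaat--Guillemin strategy, as adapted to boundary value problems by Ivrii and by Safarov--Vassiliev. Because the statement is explicitly attributed to \cite{DuiGui, safarovVassiliev}, the goal is to identify the main ingredients that let their results apply verbatim to the present setup.

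The first step is to study the smoothed spectral sum via the wave group. Since $A$ has kernel compactly supported in $\om\times\om$, geodesics (and hence billiard bicharacteristics) starting in the wavefront set of $A$ require a uniformly positive time to reach $\partial\om$. Hence for a Schwartz $\rho$ whose Fourier transform $\hat\rho$ is supported in a sufficiently small neighborhood of $0$, the operator $A\cos(t\sqrt{-\Delta_D})$ coincides, modulo smoothing, with the free Hadamard parametrix on $\re^d$. A standard stationary phase calculation of $\operatorname{tr}\bigl(A\cos(t\sqrt{-\Delta_D})\bigr)$ then yields
$$\sum_j \rho(\lambda-\lambda_j)\,\la A1_{\dbar}u_j,1_{\dbar}u_j\ral = \frac{d\,\lambda^{d-1}}{(2\pi)^d}\int_{S^*\re^d}\sigma(A)1_{\dbar}\,dx\,dS(\xi)+\O{}(\lambda^{d-2}).$$

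The second step is a Fourier Tauberian argument of the type in \cite{safarovVassiliev}. Writing $N(\lambda,A):=\sum_{\lambda_j\le\lambda}\la A1_{\dbar}u_j,1_{\dbar}u_j\ral$, one checks that $N(\lambda,A)$ has locally bounded variation (indeed, it can be written as a finite linear combination of monotone nondecreasing terms once $A$ is decomposed into positive and negative pieces via its principal symbol). The Tauberian theorem then converts the smoothed asymptotic into the sharp pointwise asymptotic
$$N(\lambda,A)=\frac{\lambda^d}{(2\pi)^d d}\int_{S^*\re^d}\sigma(A)1_{\dbar}\,dx\,dS(\xi)+\O{}(\lambda^{d-1}).$$
Taking the difference at $\lambda E$ and $\lambda$ produces the claimed windowed Weyl formula, and the AQE-at-scale-$\lambda^{-\gamma}$ conclusion for any $\gamma<1$ follows by choosing $E$ of the form $1+\lambda^{-\gamma}$ and bounding the resulting error of size $\O{}(\lambda^{d-1})$ against the main term of size $\lambda^{-\gamma}\lambda^d$.

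For the sharpened remainder under the non-periodic billiard hypothesis, the plan is to enlarge the support of $\hat\rho$ beyond the first time at which geodesics starting in $\supp\sigma(A)$ can return to themselves after reflection. The Melrose--Sjöstrand propagation-of-singularities theorem controls the wavefront set of $\cos(t\sqrt{-\Delta_D})$ under boundary reflections, and contributions to the trace of $A\rho(\lambda-\sqrt{-\Delta_D})$ from $t\neq 0$ arise only from $t$ for which there is a closed billiard trajectory of length $|t|$ meeting $\supp\sigma(A)$. The Duistermaat--Guillemin stationary phase argument, applied to the oscillatory integral representation of the trace, then shows that these contributions are $\o{}(\lambda^{d-1})$ provided the set of such $t$ (equivalently, the set of periodic trajectories) has measure zero. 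Combining with the Tauberian step gives the $\o{}(\lambda^{d-1})$ remainder, hence AQE at scale $\lambda^{-1}$.

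The main obstacle is the boundary-adapted propagation analysis needed in the last step: glancing and gliding rays are what break the straightforward closed-manifold argument of Duistermaat--Guillemin, and handling them rigorously is precisely the content of \cite{safarovVassiliev}. In practice I would simply invoke their theorem, after checking that the hypothesis on periodic billiards is exactly what is required for the \emph{clean composition} condition to hold in the boundary setting.
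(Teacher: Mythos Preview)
The paper does not give its own proof of this theorem: it is stated in Section~\ref{sec:localWeyl} as a classical result and simply cited from \cite{DuiGui, safarovVassiliev}. Your outline is a faithful sketch of the standard wave-kernel/Fourier--Tauberian argument found in those references, and in particular your observation that the compact support of the kernel of $A$ inside $\om\times\om$ lets one use the free parametrix for small $|t|$ is exactly the point.

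One small correction in your last paragraph: the measure-zero periodic-billiards hypothesis is not what makes a \emph{clean composition} hold. Clean intersection is a nondegeneracy condition on individual periodic orbits, used to extract precise singularity expansions of the wave trace at nonzero periods. The $\o{}(\lambda^{d-1})$ remainder requires no such nondegeneracy; the argument (due to Duistermaat--Guillemin on closed manifolds and extended by Ivrii and Safarov--Vassiliev to the boundary case) is that if the set of closed trajectories has measure zero, then by choosing $\hat\rho$ supported on a long interval and equal to $1$ near $0$, the contributions from $t\ne 0$ are $\o{}(\lambda^{d-1})$ regardless of cleanness, essentially by dominated convergence over the set of initial conditions. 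So you should invoke the measure-zero mechanism directly rather than the clean-intersection calculus.
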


\subsection{Manifolds without boundary}
Let $(M,g)$ be a smooth compact Riemannian manifold without boundary (i.e. a smooth manifold with smooth metric). Then the Laplace operator is given in local coordinates by 
$$-\Delta_g:=\frac{1}{\sqrt{|g|}}\partial_i(\sqrt{|g|}g^{ij}\partial j)$$
where $|g|=\det g_{ij}$ and $g(\partial_{x_i},\partial_{x_j})=g_{ij}$ with inverse $g^{ij}$. The operator $-\Delta_g$ has domain $H^2(M)$ and is invertible as an operator $L^2_m(M)\to H^2_m(M)$ where $B_m(M)$ is the set of functions in $B$ with $0$ mean. 
In this setting, we have the following version of the pointwise Weyl law~\cite{safarovVassiliev}.
\begin{thm}
\label{thm:weylStrong}
Let $\{(u_j,\lambda_j^2)\}_{j\ge 1}$ be the eigenfunctions of $-\Delta_g$. Then
$$\sum_{\lambda_j\leq \lambda} |u_j(x)|^2=\frac{\lambda^d}{(2\pi)^d\vol(M)}\vol(B_d)+\O{}(\lambda^{d-1})$$
where $B_d$ denotes the unit ball in $\re^d$.
If the set of closed geodesics has zero measure, then $\O{}(\lambda^{d-1})$ can be replaced by $\o{}(\lambda^{d-1}).$ Moreover, the asymptotics are uniform for $x\in M$. 
\end{thm}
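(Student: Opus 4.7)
The plan is to apply the classical method of Hörmander (following \cite{HOV3, safarovVassiliev}) based on microlocal analysis of the half-wave propagator, combined with a Fourier Tauberian theorem. The starting point is to study the spectral function
$$N(\lambda, x) := \sum_{\lambda_j\le \lambda}|u_j(x)|^2,$$
a non-decreasing function of $\lambda$ (for fixed $x$) whose Stieltjes derivative $dN(\lambda,x)$ is a non-negative measure. Its cosine-Fourier transform in $\lambda$ equals the diagonal wave kernel
$$U(t,x):=\cos(t\sqrt{-\Delta_g})(x,x)=\sum_j \cos(t\lambda_j)\,|u_j(x)|^2.$$

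First, I would fix a cutoff $\chi\in \Cc(\re)$ supported in $|t|<\tfrac{1}{2}\text{inj}(M,g)$ with $\chi\equiv 1$ near $0$. By finite propagation speed and the Hadamard/WKB construction, $\chi(t)U(t,x)$ can be represented as the restriction to the diagonal of an FIO kernel whose canonical relation is the graph of the geodesic flow. Computing the principal symbol in local normal coordinates at $x$ yields the expansion
$$\chi(t)U(t,x)=\frac{\chi(t)}{(2\pi)^d}\int_{T^*_xM}\cos(t|\xi|_{g_x})\,d\xi+\text{lower order},$$
with all subprincipal terms enjoying the same support properties in $t$. Next I would invoke Hörmander's Fourier Tauberian theorem: given that the Fourier transform of $dN(\cdot,x)$ localized by $\chi$ has the above form, one concludes
$$N(\lambda,x)=\frac{\lambda^d}{(2\pi)^d}\vol(B_d)+\O{}(\lambda^{d-1}).$$
The uniformity in $x\in M$ follows from the compactness of $M$ together with the uniformity of the symbol estimates of the WKB parametrix, since the injectivity radius is bounded below and all constructions can be carried out with $x$-uniform bounds.

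For the improvement to $\o{}(\lambda^{d-1})$ when the set of closed geodesics has Liouville measure zero, I would follow Duistermaat--Guillemin. The idea is to go beyond the injectivity-radius window and use a global parametrix for $e^{it\sqrt{-\Delta_g}}$ (valid for all $t$, with caustics handled through Maslov factors), whose canonical relation is the full graph of the geodesic flow. One then exploits the trace-level statement that the singular support of $t\mapsto U(t,x)$ is contained in the set of periods of closed geodesics through $x$; applying Egorov's theorem plus a refined Tauberian lemma and averaging in $x$, the hypothesis that the set of periodic geodesics has measure zero upgrades the remainder from $\O{}(\lambda^{d-1})$ to $\o{}(\lambda^{d-1})$, uniformly in $x$.

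The main obstacle is the Duistermaat--Guillemin improvement: the WKB parametrix is only valid up to the injectivity radius, so one must patch together a global FIO representation of the half-wave propagator through Maslov indices and contend with caustics. Carefully tracking how the contribution of long-time trajectories to the oscillatory integral is controlled by the measure of the set of closed trajectories, while maintaining uniformity in $x$, is the delicate point; everything else is a direct application of the symbolic calculus of FIOs and the Fourier Tauberian machinery.
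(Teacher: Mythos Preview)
The paper does not prove this theorem; it is quoted as a known result with a citation to \cite{safarovVassiliev} (and implicitly \cite{DuiGui}). There is therefore no ``paper's own proof'' to compare against, and your sketch is precisely the classical H\"ormander/Duistermaat--Guillemin argument that those references carry out.

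One point deserves sharpening. For the $\o{}(\lambda^{d-1})$ improvement you write ``averaging in $x$'', but that would only yield the integrated (trace) version of the improvement, not the pointwise statement uniform in $x$ that the theorem asserts. The pointwise argument requires, for each $x$, controlling the contribution to $U(t,x)$ from times $|t|\in[\varepsilon,T]$ by the measure in $S_x^*M$ of loop directions of period at most $T$ through $x$; the global measure-zero hypothesis on closed geodesics only gives this for almost every $x$ via Fubini, and passing to a statement uniform in $x$ takes additional work (this is where \cite{safarovVassiliev} uses that the periodic set is closed, so its fiberwise measures vary upper-semicontinuously and one can choose $T$ uniformly). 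You correctly flag this step as ``the delicate point'', but the phrase ``averaging in $x$'' points in the wrong direction.
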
 
Theorem \ref{thm:weylStrong} provides estimates uniform in $x$ that are used to prove Corollary \ref{almostcor2}.

\subsection{Functional Analysis}
\label{prelim}
Recall our convention that $\|f\|$ denotes the $L^2$ norm of a function $f$ and $\la f,g\ral$ denotes the $L^2$ inner product of $f$ and $g$. 

Let $\Omega\subset \re^d$ a bounded open set. We now recall the definition of the Dirichlet Laplacian as a self adjoint unbounded operator on $L^2(\om)$. Let $H_0^1(\om)$ denote the closure of $\Cc(\om)$ with respect to the $H^1$ norm where for $k\in \mathbb{N}$,
$$\|u\|^2_{H^k(\om)}:=\sum_{|\alpha|\leq k}\|\partial^\alpha u\|^2.$$
Here for a multiindex $\alpha\in \mathbb{N}^d$, 
$$\partial^\alpha=\partial_{x_1}^{\alpha_1}\partial_{x_2}^{\alpha_2}\dots\partial_{x_d}^{\alpha_d},\quad\quad |\alpha|=\alpha_1+\alpha_2+\dots \alpha_d.$$
Then $H_0^1(\om)$ is a Hilbert space with inner product 
$$(u,v)=\la u,v\ral+\la \nabla u,\nabla v\ral.$$
Define the quadratic form $Q:H_0^1(\om)\times H_0^1(\om)\to \mathbb{C}$ by
$$Q(u,v)=\la \nabla u, \nabla v\ral.$$
Then $Q$ is a symmetric, densely defined quadratic form and for $u,v\in H_0^1(\om)$,
$$|Q(u,v)|\leq C\|u\|_{H^1(\om)}\|v\|_{H^1(\om)},\quad\quad c\|u\|_{H^1(\om)}^2\leq Q(u,u)+C\|u\|^2.$$
Therefore by \cite[Theorem VIII.15]{Reed}, $Q$ defines a unique self-adjoint operator $-\Delta$ with domain
\[
\mfd := \{u\in H_0^1: Q(u,w)\leq C_u\|w\| \text{ for all } w\in H_0^1(\om)\}\,.
\]
This operator is called the \emph{Dirichlet Laplacian}.  Let $E_\mu$ denote the resolution of the identity for $-\Delta$, i.e. $E_\mu=1_{(-\infty, \mu]}(-\Delta)$. Then the complex interpolation space between $L^2$ and $\mfd$ is given by
$$(L^2,\mfd)_\theta:=\Big\{f\in L^2\mid \int \langle \mu\rangle ^\theta dE_\mu f\in L^2\Big\},\qquad\langle \mu\rangle :=(1+|\mu|^2)^{1/2}.$$ 
 We recall that
\begin{lmm}
\label{lmmdomain}
Suppose that $\om$ has $C^2$ boundary. Then $\mfd=H_0^1(\om)\cap H^2(\om)$ and in particular $(L^2,\mfd)_\theta\subset H^{2\theta}(\om)$. 
\end{lmm}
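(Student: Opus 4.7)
The plan is to prove the set equality $\mfd = H_0^1(\om) \cap H^2(\om)$ by establishing the two inclusions separately, and then deduce the interpolation statement from general interpolation theory for Sobolev spaces.

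For the inclusion $H_0^1(\om) \cap H^2(\om) \subset \mfd$, I would take $u \in H_0^1(\om) \cap H^2(\om)$ and first verify the required bound for $w \in \Cc(\om)$ via integration by parts: $Q(u,w) = \la \nabla u, \nabla w\ral = -\la \Delta u, w\ral$, so $|Q(u,w)| \leq \|\Delta u\|\,\|w\|$. Since $\Cc(\om)$ is dense in $H_0^1(\om)$ and both sides are continuous in $w$ with respect to the $H^1$ norm (using the Cauchy--Schwarz bound $|Q(u,w)| \leq \|\nabla u\| \|\nabla w\| \leq \|u\|_{H^1}\|w\|_{H^1}$ on the left), the estimate $|Q(u,w)| \leq \|\Delta u\|\,\|w\|$ extends to all $w \in H_0^1(\om)$, placing $u \in \mfd$.

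The reverse inclusion $\mfd \subset H_0^1(\om) \cap H^2(\om)$ is the substantive one and is the main obstacle. Starting from $u \in \mfd \subset H_0^1(\om)$, the defining inequality and the Riesz representation theorem produce $f \in L^2(\om)$ with $Q(u,w) = \la f, w\ral$ for every $w \in H_0^1(\om)$, so $-\Delta u = f$ holds in the distributional sense. It then remains to upgrade $u$ from $H_0^1$ to $H^2$. Interior regularity follows from the classical difference quotient argument applied to the weak formulation on any relatively compact subdomain. For regularity up to the boundary, I would invoke the $C^2$ hypothesis to flatten $\partial\om$ locally via a $C^2$ diffeomorphism, obtain a transformed equation with $C^1$ coefficients on a half-ball, use tangential difference quotients on the weak formulation to recover tangential second derivatives in $L^2$, and then recover the missing normal-normal second derivative by isolating $\partial_{\nu\nu} u$ algebraically from the equation $-\Delta u = f$. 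Partitioning $\dbar$ by a finite cover of such charts together with interior patches yields $u \in H^2(\om)$; this is precisely the classical global $H^2$-regularity theorem for the Dirichlet problem (cf.\ Evans' PDE text or Gilbarg--Trudinger), and the $C^2$ hypothesis is exactly the threshold at which this argument applies.

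For the interpolation conclusion, the equality $\mfd = H_0^1(\om) \cap H^2(\om)$ combined with the graph norm estimate $\|u\|_\mfd \asymp \|u\| + \|\Delta u\| \asymp \|u\|_{H^2(\om)}$ (the latter equivalence again using global $H^2$-regularity and boundary $C^2$) shows that the inclusion $\mfd \hookrightarrow H^2(\om)$ is continuous. By the interpolation functor property $(L^2(\om), \mfd)_\theta \hookrightarrow (L^2(\om), H^2(\om))_\theta$ continuously, and the standard identification $(L^2(\om), H^2(\om))_\theta = H^{2\theta}(\om)$ for $C^2$ domains (extending functions via a bounded extension operator, using the analogous identification on $\re^d$, then restricting) gives the desired inclusion $(L^2, \mfd)_\theta \subset H^{2\theta}(\om)$.
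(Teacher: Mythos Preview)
Your outline is correct and follows the standard textbook argument (elliptic regularity up to the boundary via boundary flattening and difference quotients, as in Evans or Gilbarg--Trudinger). The paper does not actually prove this lemma: it is introduced with ``We recall that'' and stated without proof, treating it as a well-known fact from the literature. So there is no paper proof to compare against; your approach is the expected one.
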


\subsection{The billiard flow}
\label{billiards}
Let $\om$ be a domain with $C^\infty$ boundary. We now define the billiard flow. Let $S^*\re^d$ be the unit sphere bundle of $\re^d$. We write 
$$ S^*\re^d|_{\partial \om}=\partial \om_+\sqcup\partial \om_-\sqcup\partial \om_0\,\,$$
where $(x,\xi)\in \partial \om_+$ if $\xi$ is pointing out of $ \om$, $(x,\xi)\in\partial \om_-$ if it points inward, and $(x,\xi)\in\partial \om_0$ if $(x,\xi)\in S^*\partial \om$. The points $(x,\xi)\in \partial \om_0$ are called \emph{glancing} points. Let $B^*\partial \om$ be the unit coball bundle of $\partial \om$, i.e. 
$$B^*\partial\om =\{(x,\xi)\in T^*\partial \om \mid |\xi|_g<1\}$$
 and denote by $\pi_{\pm}:\partial \om_{\pm}\to B^*\partial \om$ and $\pi:S^*\re^d|_{\partial \om}\to \overline{B^*\partial \om}$ the canonical projections onto $\overline{B^*\partial \om}$. Then the maps $\pi_{\pm}$ are invertible. Finally, write 
$$ t_0(x,\xi)=\inf\{t>0:\exp_t(x,\xi)\in T^*\re^d|_{\partial \om}\}\,\,$$
where $\exp_t(x,\xi)$ denotes the lift of the geodesic flow to the cotangent bundle. That is, $t_0$ is the first positive time at which the geodesic starting at $(x,\xi)$ intersects $\partial \om$.

We define the billiard flow as in \cite[Appendix A]{DyZw}. Fix $(x,\xi)\in S^*\re^d\setminus \partial \om_0$ and denote $t_0=t_0(x,\xi)$. Then since $\partial\om$ is $C^\infty$ and $(x,\xi)\notin \partial\Omega_0$, $t_0\in (0,\infty]$. We assume now that $t_0<\infty$. If $\exp_{t_0}(x,\xi)\in \partial \om_0$, then the billiard flow cannot be continued past $t_0$. Otherwise there are two cases: $\exp_{t_0}(x,\xi)\in \partial \om_+$ or $\exp_{t_0}(x,\xi)\in \partial \om_-$. We let 
$$(x_0,\xi_0)=\begin{cases}
\pi_-^{-1}(\pi_+(\exp_{t_0}(x,\xi)))\in \partial \om_-\,,&\text{if }\exp_{t_0}(x,\xi)\in \partial \om_+\\
\pi_+^{-1}(\pi_-(\exp_{t_0}(x,\xi)))\in \partial \om_+\,,&\text{if }\exp_{t_0}(x,\xi)\in \partial \om_-.
\end{cases}$$
That is, $(x_0,\xi_0)$ is the reflection of $\exp_{t_0}(x,\xi)$ along the normal bundle of $\partial\om$ through $T_x^*\partial\om$.  
We then define $\varphi_t(x,\xi)$, the \emph{billiard flow}, inductively by putting 
\[
\varphi_t(x,\xi)=\begin{cases}\exp_t(x,\xi)&0\leq t<t_0,\\
\varphi_{t-t_0}(x_0,\xi_0)&t\geq t_0.
\end{cases}
\]
We say that the trajectory starting at $(x,\xi)\in S^*\re^d$ is \emph{periodic} if there exists $t>0$ such that $\varphi_t(x,\xi)=(x,\xi).$ 

\subsection{Probabilistic Notation}
We now introduce a few notations from probability. Recall that $\pp(A)$ denotes the probability of the event $A$ and $\ee(X)$ denotes the expected value of the random variable $X$. Finally, $\ee(X;A)$ denotes the expectation of the random variable $X$ conditioned on~$A$.

\section{Concentration of random rotations}\label{rotationsec}
Let $u_1,\ldots, u_n$ be an orthonormal set of real valued functions belonging to $L^2(\dbar)$. Let $Q$ be an $n\times n$ Haar-distributed random orthogonal matrix. Let $q_{ij}$ denote the  $(i,j)^{\textup{th}}$ entry of $Q$. Define a new set of functions $v_1,\ldots, v_n$ as 
\[
v_i(x) := \sum_{j=1}^n q_{ij} u_j(x)\,.
\]
Then $v_1,\ldots, v_n$ are also orthonormal, since
\begin{align*}
\la v_i,v_j\ral &= \la \sum_{k,l=1}^n q_{ik}q_{jl}  u_k,u_l\ral= \sum_{k,l=1}^n q_{ik} q_{jl} \la u_k,u_l\ral\\
&= \sum_{k=1}^n q_{ik}q_{jk} = 
\begin{cases}
1 &\text{ if } i=j\, ,\\
0 &\text{ otherwise.}
\end{cases}
\end{align*}
We will refer to $v_1,\ldots, v_n$ as a \emph{random rotation} of $u_1,\ldots, u_n$.  The goal of this section is to prove the following concentration inequality for random rotations.
\begin{thm}\label{rotthm}
Let $u_i$ and $v_i$ be as above. Let $A:L^2(\om )\to L^2(\om )$ be a bounded operator. 
Then for any $1\le i\le n$ and any $t>0$,
\begin{align*}
\pp\biggl(\biggl|\la Av_i,v_i\ral - \frac{1}{n}\sum_{i=1}^n\la Au_i,u_i\ral\biggr| \ge t\biggr) &\le C_1 \exp\bigl(-C_2(\|A\|)  \min\{t^2, \, t\} n\bigr)\,,
\end{align*}
where $C_1$ depends only of $d$ and $\om $, and $C_2(\|A\|)$ depends on $d$, $\om $ and the operator norm, $\|A\|$. 
\end{thm}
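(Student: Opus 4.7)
The plan is to express $\langle Av_i,v_i\rangle$ as a quadratic form in the $i$-th row of $Q$, pass from the Haar measure on $O(n)$ to a Gaussian representation, and then apply the Hanson--Wright inequality of \cite{rudelsonvershynin13} cited earlier. Let $q_i=(q_{i1},\ldots,q_{in})\in \re^n$ be the $i$-th row of $Q$ and define the $n\times n$ real matrix $M$ by $M_{jk}:=\la Au_j,u_k\ral$. Expanding $v_i=\sum_j q_{ij}u_j$ and using linearity yields
\[
\la Av_i,v_i\ral =\sum_{j,k=1}^n q_{ij}q_{ik}\la Au_j,u_k\ral = q_i^T M q_i,\qquad \tr(M)=\sum_{j=1}^n \la Au_j,u_j\ral,
\]
so that the claim becomes concentration of the quadratic form $q_i^T M q_i$ around $\tr(M)/n$.

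Since $q_i$ is uniformly distributed on $S^{n-1}$, I would use the standard representation $q_i \stackrel{d}{=} g/\|g\|$ with $g\sim N(0,I_n)$, so that
\[
q_i^TMq_i \stackrel{d}{=} \frac{g^TMg}{\|g\|^2}=\frac{g^TM_s g}{\|g\|^2},\qquad M_s:=\tfrac{1}{2}(M+M^T),
\]
the second equality using that $g^T N g=0$ for any antisymmetric $N$. Now the Rudelson--Vershynin form of Hanson--Wright gives, for Gaussian $g$,
\[
\pp\bigl(|g^TM_s g-\tr(M_s)|>s\bigr)\leq 2\exp\bigl(-c\min(s^2/\|M_s\|_F^2,\,s/\|M_s\|)\bigr).
\]
Because $\{u_j\}$ is orthonormal, the matrix $M$ is the matrix of $A$ restricted to $\mathrm{span}\{u_1,\ldots,u_n\}$, so $\|M_s\|\leq \|M\|\leq \|A\|$, while
\[
\|M_s\|_F^2\leq \|M\|_F^2=\sum_{j,k}|\la Au_j,u_k\ral|^2\leq \sum_j\|Au_j\|^2\leq n\|A\|^2.
\]
Setting $s=tn/2$ then yields a bound of the form $2\exp(-c\,n\min(t^2/\|A\|^2,\,t/\|A\|))$.

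To pass from concentration of $g^T M g$ to concentration of $g^T M g/\|g\|^2$, I would use the algebraic identity
\[
\frac{g^TMg}{\|g\|^2}-\frac{\tr(M)}{n}=\frac{g^TMg-\tr(M)}{\|g\|^2}+\frac{\tr(M)}{n}\cdot\frac{n-\|g\|^2}{\|g\|^2}.
\]
On the high-probability event $\{\|g\|^2\geq n/2\}$ (itself controlled by Hanson--Wright applied to the identity matrix, i.e.\ $\chi^2_n$ concentration), the first term is at most $2|g^TMg-\tr(M)|/n$, controlled by the previous step, and the second term is bounded by $\|A\|\cdot 2|n-\|g\|^2|/n$ because $|\tr(M)|/n\leq\|A\|$. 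A union bound over both events, with $t$ split into two equal halves, gives the claimed inequality with $C_2(\|A\|)=c/(1+\|A\|)^2$ and a universal $C_1$.

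The estimates themselves are standard once the setup is in place; the only genuinely delicate point is handling the ratio $g^TMg/\|g\|^2$ rather than a pure Gaussian quadratic form, which is resolved by the decomposition above. No regularity property of $\om$ actually enters this section, so all $\om$-dependence in $C_1,C_2$ is vacuous for this theorem and can be absorbed into universal constants.
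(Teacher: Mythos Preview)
Your proposal is correct and follows essentially the same route as the paper: write $\langle Av_i,v_i\rangle=q_i^TMq_i$, pass to Gaussians via the spherical representation $q_i\stackrel{d}{=}g/\|g\|$, bound $\|M\|\le\|A\|$ and $\|M\|_F\le\sqrt{n}\,\|A\|$, and apply Hanson--Wright both to $g^TMg$ and to $\|g\|^2$. The only difference is cosmetic, in how the normalization is handled. You divide by $\|g\|^2$ and then condition on $\{\|g\|^2\ge n/2\}$ to control the denominator; the paper instead couples $q_i$ with an independent $r_i\sim\|g\|$ and works with $A_i':=r_i^2\langle Av_i,v_i\rangle$, a genuine Gaussian quadratic form, so that $nA_i-A_i'=(n-r_i^2)A_i$ and the deterministic bound $|A_i|\le\|A\|$ immediately reduces this to the $\chi^2$ tail. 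This avoids the extra conditioning event and the associated $e^{-cn}$ term in the union bound, but the two arguments are otherwise identical. Your remark that no property of $\Omega$ is used is also correct.
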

The key ingredient in the proof of Theorem \ref{rotthm} is the Hanson--Wright inequality~\cite{hansonwright71} for quadratic forms of sub-Gaussian random variables. The original form of the Hanson--Wright inequality does not suffice for our objective. Instead, the following modern version of the inequality, proved recently by \citet{rudelsonvershynin13}, is the one that we will use. 

The reason why $\la Av_i, v_i\ral$ is concentrated around its mean is that it can be expressed approximately as a quadratic form of i.i.d.~Gaussian random variables, and the eigenvalues of the matrix defining this quadratic form are roughly of equal size. The spectral decomposition then implies that this quadratic form can be written as a linear combination of squares of i.i.d.~Gaussian random variables, where the coefficients are roughly of equal size. The details are worked out below.

Define the $\psi_2$ norm of a random variable $X$ as 
\[
\|X\|_{\psi_2} := \sup_{p\ge 1} p^{-1/2}(\ee|X|^p)^{1/p}\,.
\]
The random variable $X$ is called sub-Gaussian if its $\psi_2$ norm is finite. In particular, Gaussian random variables have this property. 

Let $M= (m_{ij})_{1\le i,j\le n}$ be a square matrix with real entries. The Hilbert--Schmidt norm of $M$ is defined as
\[
\|M\|_{\textup{HS}} := \biggl(\sum_{i,j=1}^n m_{ij}^2\biggr)^{1/2}\,,
\]
and the operator norm of $M$ is defined as
\[
\|M\| := \sup_{x\in \rr^n,\, \|x\|=1}\|Mx\|\,,
\]
where the norm on the right side is the Euclidean norm on $\rr^n$. Rudelson and Vershynin's version of the Hanson--Wright inequality states that if $X_1,\ldots, X_n$ are independent random variables with mean zero and $\psi_2$ norms bounded by some constant $K$, and 
\[
R := \sum_{i,j=1}^n m_{ij} X_iX_j\,,
\]
then for any $t\ge 0$,
\begin{equation}\label{hs}
\pp(|R - \ee(R)|\ge t) \le 2\,\exp\biggl(-C \min\biggl\{\frac{t^2}{K^4\|M\|_{\textup{HS}}^2}, \frac{t}{K^2\|M\|}\biggr\}\biggr)\,,
\end{equation}
where $C$ is a positive universal constant.
\begin{proof}[Proof of Theorem \ref{rotthm}]
Fix $1\le i\le n$. Define 
\[
A_i := \la Av_i,v_i\ral,\quad\quad
B := \frac{1}{n}\sum_{i=1}^n\la Au_i,u_i\ral.
\]
Notice that for each $j$,
\[
\sum_{i=1}^n q_{ij}^2 = 1
\]
and for each $j\ne k$,
\[
\sum_{i=1}^n q_{ij} q_{jk}=0.
\]
Note that the distribution of $Q$ remains invariant under arbitrary permutations of rows. Therefore, for any $j$ and $k$, $\ee(q_{ij}q_{ik})$ is the same for each $i$. Thus, the above identities imply that 
\[
\ee(q_{ij} q_{ik}) =
\begin{cases}
1/n &\text{ if } j=k\, ,\\
0 &\text{ otherwise.}
\end{cases}
\]
Therefore 
\begin{equation}\label{hs1}\begin{aligned}\mathbb{E}(\la Av_i,v_i\ral)&=\sum_{jk}\mathbb{E}(\la A q_{ij}u_j,q_{ik}u_k\ral)=\sum_{jk}\mathbb{E}(q_{ik}q_{ij})\la Au_j,u_k\ral \\
&=\frac{1}{n}\sum_{j=1}^n\la Au_j,u_j\ral=B.\end{aligned}\end{equation}
Let $q_i$ be the vector whose $j^{\textup{th}}$ component is $q_{ij}$. Since $Q$ is a Haar-distributed random orthogonal matrix, symmetry considerations imply that $q_i$ is uniformly distributed on the unit sphere $S^{n-1}$. Now recall that if $z$ is an $n$-dimensional standard Gaussian random vector, then $z/\|z\|$ is uniformly distributed on $S^{n-1}$, and is independent of $\|z\|$. Therefore if $r_i$ is a random variable that has the same distribution as $\|z\|$ and is independent of $q_i$, then the vector $r_i q_i$ is a standard Gaussian random vector. Let $w_{ij} := r_iq_{ij}$, so that $w_{i1},\ldots, w_{in}$ are i.i.d.~standard Gaussian random variables. 
Let $H$ be the matrix with $(j,k)^{\text{th}}$ entry 
$$h_{jk}=\la Au_j,u_k\ral$$
so that 
$$A_i=\sum_{j,k} q_{ij}q_{ik}h_{jk}.$$
Then define
\[
A_i' := r_i^2 A_i =\sum_{j,k=1}^n w_{ij} w_{ik} h_{jk}\,.
\]
Note that $H$ can also be written as $H=\Pi A\Pi$, where $\Pi$ denotes orthogonal projection onto $\text{span}\{u_j: 1\le j\le n\}$. Therefore
$$\|H\|\leq \|A\|$$
and
\begin{align*}
\|H\|_{\text{HS}}&=\sqrt{\tr(H^*H)}= \sqrt{\tr(\Pi^*A^*A\Pi)}\\
&= \sqrt{\tr(A^*A \Pi \Pi^*)} = \sqrt{\tr(A^*A)} = \|A\|_{\text{HS}}\leq \|A\|\sqrt{n}.
\end{align*}
Therefore by the Hanson--Wright inequality \eqref{hs}, with $X_j=w_{ij}$ and $m_{ij}=h_{ij}$ gives
\begin{align}\label{hs2}
\pp(|A_i'-\ee(A_i')|\ge t) \le 2\exp\biggl(-C(\|A\|)\min\biggl\{\frac{t^2}{n},\, t\biggr\}\biggr)\,,
\end{align}
where $C(\|A\|)=\min (\|A\|^{-2},\|A\|)$. Again note that by the Hanson--Wright inequality and the fact that $\ee(r_i^2)=n$,
\begin{align}\label{hs3}
\pp(|r_i^2-n|\ge t)\le 2\exp\biggl(-C\min\biggl\{\frac{t^2}{n},\, t\biggr\}\biggr)\,.
\end{align}
Next, note that
\begin{align}
|A_i| &\le \|A\|_{L^2\to L^2}\|v_i\|^2= \|A\|_{L^2\to L^2} \sum_{j,k=1}^n q_{ij} q_{ik} \la u_j, u_k\ral\\
&= \|A\|_{L^2\to L^2}\sum_{j=1}^n q_{ij}^2 = \|A\|_{L^2\to L^2}\,.\label{hs4}
\end{align}
Finally, observe  that since $r_i^2$ is the square norm of a Gaussian random varianble in $\mathbb{C}^n$, $\mathbb{E}r_i^2=n$ and 
\begin{equation}\label{hs5}
\ee(A_i') = n\ee(A_i)\,.
\end{equation}
Combining \eqref{hs1}, \eqref{hs2}, \eqref{hs3}, \eqref{hs4} and \eqref{hs5} we get 
\begin{align*}
\pp(|A_i-B|\ge t) &\le \pp(|nA_i - A_i'|\ge nt/2) + \pp(|A_i' - \ee(A_i')|\ge nt/2)\\
&\le \pp(|(r_i^2-n)A_i|\ge nt/2) + \pp(|A_i' - \ee(A_i')|\ge nt/2)\\
&\le \pp(|r_i^2-n|\ge nt/(2\|A\|_{L^2\to L^2})) + \pp(|A_i' - \ee(A_i')|\ge nt/2)\\
&\le C_1 \exp\bigl(-C_2(\|A\|)  \min\{t^2, \, t\} n\bigr)\,,
\end{align*} 
which concludes the proof of the theorem.
\end{proof}

\section{Construction of the perturbed Laplacian}\label{last}

As described in Section \ref{sec:outline}, our strategy will be to break up the spectrum of $-\Delta$ into blocks and to `mix' the eigenfunctions in each block to produce a new operator that is QUE. To do this, it is convenient to work on the spectral side. We will need a few linear algebra lemmas.

Let $\Psi = (\psi_i)_{i\ge 1}$ be a complete orthonormal basis of $L^2(\dbar)$. Let $\Lambda = (\mu_i)_{i\ge 1}$ be a sequence of real numbers. For $s\geq 0$, let $\mf^s(\Psi, \Lambda)$ be the Hilbert space (with complex scalars) consisting of all $f\in L^2(\dbar)$ such that the norm
\[
\|f\|^2_{\mf^s(\Psi,\Lambda)}:=\sum_{i=1}^\infty \la\mu_i\ral^{2s} |\la f,\psi_i\ral|^2 <\infty\,.
\]
Here $\la \mu\ral:=(1+|\mu|^2)^{1/2}.$ For $s<0$, $\mf^s(\Psi,\Lambda):=(\mf^{-s}(\Psi,\Lambda))^*$ is the completion of $L^2(\dbar)$ with respect to $\|\cdot \|_{\mf^s(\Psi,\Lambda)}.$ 
For any $f\in \mf(\Psi, \Lambda):=\mf^1(\Psi,\Lambda)$, the series
\[
T_{\Psi,\Lambda}f:=\sum_{i=1}^\infty \mu_i \la f,\psi_i\ral\psi_i
\]
converges in $L^2(\dbar)=\mf^0(\Psi,\Lambda)$. When $\Psi$ and $\Lambda$ are  clear from context, we will sometimes write $\mf^s$ instead of $\mf^s(\Psi,\Lambda)$. 

\begin{remark}
In our applications, $T_{\Psi,\Lambda}$ will be the Laplacian and $\mf^s(\Psi,\Lambda)$ will be $H^{2s}$. 
\end{remark}

\begin{lmm}\label{compare1}
Let $T_{\Psi, \Lambda}$ be as above. Let $\Lambda' = (\mu_i')_{i\ge 1}$ be another sequence of real numbers. Let $\ep\in (0,1)$ and $\gamma\geq 0$ be numbers such that for all $i$, 
$$|\mu_i' - \mu_i| \le \ep \la\mu_i\ral^{1-\gamma}.$$ 
Then $\|\cdot\|_{\mf^s(\Psi, \Lambda')}$ is equivalent to $\|\cdot\|_{\mf^s(\Psi, \Lambda)}$, and for all $s\in \re$, $T_{\Psi,\Lambda'}-T_{\Psi,\Lambda}:\mf^s(\Psi,\Lambda)\to \mf^{s-1+\gamma}(\Psi,\Lambda)$ with
$$\|T_{\Psi,\Lambda'} - T_{\Psi,\Lambda} \|_{\mf^s(\Psi,\Lambda)\to\mf^{s-1+\gamma}(\Psi,\Lambda)}  \le \ep.$$ 
\end{lmm}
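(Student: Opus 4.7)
The key observation is that $T_{\Psi,\Lambda'}-T_{\Psi,\Lambda}$ is diagonal in the basis $\Psi$, with eigenvalues $\mu_i'-\mu_i$ satisfying the pointwise bound $|\mu_i'-\mu_i|\leq \e \langle\mu_i\rangle^{1-\gamma}$. Since both the source space $\mf^s(\Psi,\Lambda)$ and the target space $\mf^{s-1+\gamma}(\Psi,\Lambda)$ are weighted $\ell^2$ spaces in the same basis $\Psi$, the operator bound will essentially fall out of a direct computation once the norm equivalence has been established.

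The plan is to handle the two claims in order. For the norm equivalence, I would first show that for all $i$ one has $c\langle\mu_i\rangle\leq \langle\mu_i'\rangle\leq C\langle\mu_i\rangle$ for constants depending only on $\e$. The upper bound follows from $|\mu_i'|\leq |\mu_i|+\e\langle\mu_i\rangle^{1-\gamma}\leq |\mu_i|+\e\langle\mu_i\rangle$ (since $\gamma\geq 0$ and $\langle\mu_i\rangle\geq 1$), giving $\langle\mu_i'\rangle\leq (1+\e)\langle\mu_i\rangle+1\leq C\langle\mu_i\rangle$; the lower bound follows from the reverse triangle inequality and $\e<1$. From this it is immediate that $\langle\mu_i'\rangle^{2s}$ and $\langle\mu_i\rangle^{2s}$ are comparable for $s\geq 0$, so the $\mf^s$ norms agree up to a constant. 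For $s<0$ one gets the same conclusion by duality, since the dual of $\mf^{-s}(\Psi,\Lambda)$ with respect to the $L^2$ pairing is exactly $\mf^s(\Psi,\Lambda)$ with equivalent norms.

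For the operator estimate, given $f=\sum_i a_i\psi_i\in \mf^s(\Psi,\Lambda)$ with $a_i=\langle f,\psi_i\rangle$, the definitions give
\[
(T_{\Psi,\Lambda'}-T_{\Psi,\Lambda})f=\sum_i (\mu_i'-\mu_i)a_i\psi_i.
\]
Computing in $\mf^{s-1+\gamma}(\Psi,\Lambda)$ and inserting the hypothesis $|\mu_i'-\mu_i|\leq \e\langle\mu_i\rangle^{1-\gamma}$ yields
\[
\|(T_{\Psi,\Lambda'}-T_{\Psi,\Lambda})f\|_{\mf^{s-1+\gamma}}^2
=\sum_i \langle\mu_i\rangle^{2(s-1+\gamma)}|\mu_i'-\mu_i|^2|a_i|^2
\leq \e^2\sum_i \langle\mu_i\rangle^{2s}|a_i|^2=\e^2\|f\|_{\mf^s}^2,
\]
which is exactly the desired bound. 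A small subtlety: this calculation is initially done on the algebraic span of the $\psi_i$ (or on $\mf^{\max(s,s-1+\gamma)}$) where both sides make sense termwise; the estimate then extends by density/duality to general $f\in \mf^s(\Psi,\Lambda)$, and for $s<0$ this final step is the only place where one uses the duality characterization of the negative-index spaces.

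The calculation itself is routine; the only point that needs mild care is the negative-index case, where one must verify that the series defining $T_{\Psi,\Lambda'}-T_{\Psi,\Lambda}$ makes sense on the completion and that the bound propagates under duality. This is a standard argument using that the operator is diagonal with bounded symbol relative to the weight $\langle\mu_i\rangle^{1-\gamma}$, so no obstacle of substance is expected.
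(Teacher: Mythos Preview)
Your proposal is correct and follows essentially the same approach as the paper: both establish the pointwise comparability $\langle\mu_i'\rangle\sim\langle\mu_i\rangle$ from the hypothesis (using $\gamma\ge 0$ and $\ep<1$) to get the norm equivalence, and then perform the identical direct computation on the diagonal operator to obtain the bound $\ep$, extending to all $s$ by density. The only cosmetic difference is that you invoke duality explicitly for $s<0$ whereas the paper simply appeals to density of $\mf^1$ in $\mf^s$ for $s\le 1$.
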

\begin{proof}
Since $\la\mu_i'\ral\le (1+\ep)\la\mu_i\ral$, we have $\|\cdot\|_{\mf^s(\Psi, \Lambda')}\leq C\|\cdot\|_{ \mf^s(\Psi, \Lambda)}$. On the other hand since $\la\mu_i\ral\le \la \mu_i'\ral/(1-\ep)$, so $\|\cdot\|_{\mf^s(\Psi, \Lambda)} \leq C\|\cdot\|_{\mf^s(\Psi, \Lambda')}$. 

Next, let $f\in \mf^s(\Psi,\Lambda)$ with $s\geq 1$.
Then
$$(T_{\Psi,\Lambda}-T_{\Psi,\Lambda'})f=\sum_i (\mu_i-\mu_i')\la f,\psi_i\ral\psi_i.$$ 
Therefore, 
\begin{align*} \|(T_{\Psi,\Lambda}-T_{\Psi,\Lambda'})f\|^2_{\mf^{s-1+\gamma}(\Psi,\Lambda)}&=\sum_i \la \mu_i\ral^{2s-2+2\gamma}|\mu_i-\mu_i'|^2|\la f,\psi_i\ral|^2\\
&\leq \sum_i\la \mu_i\ral^{2s-2+2\gamma}\e^2\la\mu_i\ral^{2(1-\gamma)}|\la f,\psi_i\ral|^2\\
&\leq \e^2\sum_i\la\mu_i\ral^{2s}|\la f,\psi_i\ral|^2\leq \e^2\|f\|_{\mf^{s}(\Psi,\Lambda)}^2\,.
\end{align*}
The density of $\mf(\Psi,\Lambda)$ in $\mf^{s}(\Psi,\Lambda)$ for $s\leq 1$  implies that the result extends to $s\in \re$. 
This concludes the proof of the lemma. 
\end{proof}
\begin{lmm}\label{compare2}
Let $\Psi$ and $\Lambda$ be as above. Let $L$ be the set of distinct elements of $\Lambda$. For each $\ell \in L$, let $I_\ell$ be the set of all $i$ such that $\mu_i = \ell$. Assume that $|I_\ell|$ is finite for each $\ell$. Let $\Psi' = (\psi_i')_{i\ge 1}$ be another complete orthonormal basis, such that for each $\ell\in L$, the span of $(\psi'_i)_{i\in I_\ell}$ equals the span of $(\psi_i)_{i\in I_{\ell}}$. Then for all $s$, $\mf^s(\Psi', \Lambda)=\mf^s(\Psi, \Lambda)$ and $T_{\Psi',\Lambda} = T_{\Psi, \Lambda}$. 
\end{lmm}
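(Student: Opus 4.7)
The key observation to exploit is that the operator $T_{\Psi,\Lambda}$ and the norm on $\mf^s(\Psi,\Lambda)$ should really only depend on the spectral decomposition of $L^2(\dbar)$ into eigenspaces, not on the choice of orthonormal basis inside each eigenspace. I would make this precise using orthogonal projections.

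For each $\ell\in L$, let $P_\ell$ denote the orthogonal projection of $L^2(\dbar)$ onto the (finite-dimensional) subspace $V_\ell:=\text{span}\,(\psi_i)_{i\in I_\ell}$. The hypothesis that $\text{span}\,(\psi'_i)_{i\in I_\ell}=V_\ell$ means that $P_\ell$ is also the orthogonal projection onto the span of $(\psi'_i)_{i\in I_\ell}$, so inside the subspace $V_\ell$ both $(\psi_i)_{i\in I_\ell}$ and $(\psi'_i)_{i\in I_\ell}$ are orthonormal bases and therefore $\sum_{i\in I_\ell}|\langle f,\psi_i\rangle|^2=\|P_\ell f\|^2=\sum_{i\in I_\ell}|\langle f,\psi'_i\rangle|^2$ for every $f\in L^2(\dbar)$.

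The first step is to regroup the defining sum for the $\mf^s$ norm according to the value of $\mu_i$:
\[
\|f\|^2_{\mf^s(\Psi,\Lambda)}=\sum_{\ell\in L}\la\ell\ral^{2s}\sum_{i\in I_\ell}|\la f,\psi_i\ral|^2=\sum_{\ell\in L}\la\ell\ral^{2s}\|P_\ell f\|^2,
\]
and the analogous identity holds for $\Psi'$. Since the right-hand side depends only on the projections $P_\ell$, which by hypothesis coincide for $\Psi$ and $\Psi'$, the norms agree for $s\geq 0$, so $\mf^s(\Psi,\Lambda)=\mf^s(\Psi',\Lambda)$ as Hilbert spaces in that range; duality then extends the identification to $s<0$.

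The second step is the analogous regrouping for $T_{\Psi,\Lambda}$. For $f\in \mf(\Psi,\Lambda)=\mf(\Psi',\Lambda)$, I would write
\[
T_{\Psi,\Lambda}f=\sum_{\ell\in L}\ell\sum_{i\in I_\ell}\la f,\psi_i\ral\psi_i=\sum_{\ell\in L}\ell\,P_\ell f,
\]
where the inner sum is a finite sum equal to $P_\ell f$ by the orthonormal basis property inside $V_\ell$, and the outer sum converges in $L^2$ by the definition of $\mf(\Psi,\Lambda)$. Performing the same computation with $\Psi'$ gives the same expression, so $T_{\Psi',\Lambda}=T_{\Psi,\Lambda}$.

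There is no substantial obstacle here; the only point that requires a moment of care is the finiteness assumption on $|I_\ell|$, which guarantees that $P_\ell$ acts as the finite sum $\sum_{i\in I_\ell}\la\cdot,\psi_i\ral\psi_i$ without any convergence issues when manipulating the series, and that the regrouping by $\ell$ is an unconditional rearrangement of a sum of nonnegative terms (for the norm) or an $\ell^2$-convergent rearrangement (for $Tf$).
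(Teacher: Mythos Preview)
Your proof is correct and takes essentially the same approach as the paper: both observe that the $\mf^s$ norm and the operator $T_{\Psi,\Lambda}$ depend only on the orthogonal projections $P_\ell$ onto the eigenspaces $V_\ell$, and that these projections are independent of the choice of orthonormal basis within each $V_\ell$. The paper states this in one sentence while you spell out the regrouping in detail, but the underlying argument is identical.
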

\begin{proof}
The lemma follows from the fact that for any $\varphi\in C^\infty(\re)$, 
$$\sum_i \varphi(\mu_i)\langle f, \psi_i\rangle \psi_i=\sum_{\ell\in L}\Pi_{\ell}f$$
where $\Pi_{\ell}$ denotes the orthognonal projection onto the span of $\{\phi_i\mid \mu_i=\ell\}$ and the fact that this span is clearly invariant under choices of bases. 

\end{proof}

\begin{lmm}\label{compare3}
Suppose that $\Lambda$ has $|\mu_i|>c>0$ with $|\mu_i|\to \infty$. 
Let $\gamma_i := 1/\mu_i$ and $\Gamma := (\gamma_i)_{i\ge 1}$. Then $\mf(\Phi, \Gamma)= L^2(\dbar)$ and the range of $T_{\Phi, \Gamma}$ on $L^2$ is contained in $\mf(\Phi, \Lambda)$.  Moreover, $T_{\Phi, \Lambda}T_{\Phi, \Gamma} = I$. 
\end{lmm}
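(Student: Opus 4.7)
The plan is to verify the three assertions directly from the definitions of $\mf^s(\Psi,\Lambda)$ and $T_{\Psi,\Lambda}$ given above, exploiting that the hypothesis $|\mu_i|>c>0$ forces $|\gamma_i|=|\mu_i|^{-1}$ to be uniformly bounded.

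For the first assertion, I would observe that $1\le \la\gamma_i\ral\le \sqrt{1+c^{-2}}$, so for any $f\in L^2(\dbar)$,
\[
\|f\|^2\le\sum_{i}\la\gamma_i\ral^{2}|\la f,\psi_i\ral|^2\le (1+c^{-2})\|f\|^2,
\]
which shows $\|\cdot\|_{\mf(\Psi,\Gamma)}$ is equivalent to the $L^2$-norm and hence $\mf(\Psi,\Gamma)=L^2(\dbar)$.

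For the second assertion, given $f\in L^2$ the definition gives $T_{\Psi,\Gamma}f=\sum_i \gamma_i\la f,\psi_i\ral\psi_i$, which converges in $L^2$ because $|\gamma_i|\le c^{-1}$. To check it lies in $\mf(\Psi,\Lambda)$, I compute
\[
\|T_{\Psi,\Gamma}f\|_{\mf(\Psi,\Lambda)}^2=\sum_i \la\mu_i\ral^{2}|\gamma_i|^2|\la f,\psi_i\ral|^2=\sum_i\bigl(1+\mu_i^{-2}\bigr)|\la f,\psi_i\ral|^2\le (1+c^{-2})\|f\|^2,
\]
using $\la\mu_i\ral^2\gamma_i^2=(1+\mu_i^2)/\mu_i^2$. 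Thus $T_{\Psi,\Gamma}$ maps $L^2$ boundedly into $\mf(\Psi,\Lambda)$.

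Finally, for the identity $T_{\Psi,\Lambda}T_{\Psi,\Gamma}=I$, since the second assertion puts $T_{\Psi,\Gamma}f$ in the domain $\mf(\Psi,\Lambda)$ of $T_{\Psi,\Lambda}$, I may apply $T_{\Psi,\Lambda}$ termwise, using $\la T_{\Psi,\Gamma}f,\psi_j\ral=\gamma_j\la f,\psi_j\ral$ and $\mu_j\gamma_j=1$, to obtain
\[
T_{\Psi,\Lambda}T_{\Psi,\Gamma}f=\sum_j \mu_j\gamma_j\la f,\psi_j\ral\psi_j=\sum_j\la f,\psi_j\ral\psi_j=f.
\]
There is no real obstacle here; the only point requiring care is justifying termwise application of $T_{\Psi,\Lambda}$, which is handled by the boundedness estimate above ensuring the relevant series converges in $L^2$. (I am treating the symbol $\Phi$ in the statement as $\Psi$, the notation established for the orthonormal basis in the preceding lemmas.)
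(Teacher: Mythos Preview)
Your proof is correct and follows exactly the approach the paper indicates: the paper's own proof simply notes that $f\in\mf(\Phi,\Gamma)$ because $\gamma_i\to 0$, and then declares that ``the remainder of the proof follows from elementary computations together with the definition of $\mf(\Phi,\Lambda)$.'' You have supplied precisely those elementary computations, and your observation about the $\Phi$/$\Psi$ notation is apt---the paper uses $\Psi$ for the generic basis in the surrounding lemmas and switches to $\Phi$ in this statement without comment.
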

\begin{proof}
If $f\in L^2(\dbar)$, then clearly $f\in \mf(\Phi,\Gamma)$ since $\gamma_i \ra 0$ as $i\ra \infty$. The remainder of the proof follows from elementary computations together with the definition of $\mf(\Phi,\Lambda).$
\end{proof}

Now let $(u_i,\lambda_i^2)_{i\ge 1}$ be a complete orthonormal basis of eigenfunctions of the Dirichlet Laplacian. The let $\Phi=\{u_i\}$ and $\Lambda =\lambda_i^2 $.
\begin{lmm}
\label{inverse}
Let $T_{\Phi,\Gamma}$ be as in Lemma \ref{compare3}.  Then $\mf(\Phi,\Lambda)=\mfd$ and $T_{\Phi,\Gamma}\Delta f=-f$. 
\end{lmm}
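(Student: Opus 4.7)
The plan is to deduce both assertions from the spectral theorem applied to the self-adjoint Dirichlet Laplacian $-\Delta$, exploiting the given complete orthonormal eigenbasis $\{(u_i, \lambda_i^2)\}_{i \geq 1}$. Since $-\Delta$ is a positive self-adjoint operator with discrete spectrum (as set up in Section \ref{prelim}), the functional calculus yields the canonical characterization
\[
\mfd = \Bigl\{f \in L^2(\dbar) : \sum_{i=1}^\infty \lambda_i^4 |\langle f, u_i\rangle|^2 < \infty\Bigr\},
\]
which is the one genuinely non-trivial input. This follows from the uniqueness of the self-adjoint operator associated with the quadratic form $Q$ together with the spectral theorem.

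For the first assertion $\mf(\Phi, \Lambda) = \mfd$, I compare defining conditions. By definition $f \in \mf(\Phi, \Lambda) = \mf^1(\Phi,\Lambda)$ iff $\sum_i \langle \lambda_i^2 \rangle^2 |\langle f, u_i\rangle|^2 < \infty$. Using $\langle \lambda_i^2\rangle^2 = 1 + \lambda_i^4$ and Parseval's identity $\sum_i |\langle f, u_i\rangle|^2 = \|f\|^2 < \infty$, this is equivalent to membership in $L^2(\dbar)$ together with $\sum_i \lambda_i^4 |\langle f, u_i\rangle|^2 < \infty$, which is precisely the spectral description of $\mfd$ displayed above.

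For the identity $T_{\Phi, \Gamma} \Delta f = -f$, I take $f \in \mfd$ and expand $f = \sum_i \langle f, u_i\rangle u_i$ in $L^2$. By self-adjointness of $-\Delta$ and the eigenfunction equation $-\Delta u_i = \lambda_i^2 u_i$,
\[
\langle \Delta f, u_i\rangle = \langle f, \Delta u_i\rangle = -\lambda_i^2 \langle f, u_i\rangle.
\]
Since $\gamma_i = 1/\lambda_i^2$, the definition of $T_{\Phi, \Gamma}$ from Lemma \ref{compare3} then gives
\[
T_{\Phi,\Gamma} \Delta f = \sum_{i=1}^\infty \frac{1}{\lambda_i^2} \langle \Delta f, u_i\rangle u_i = -\sum_{i=1}^\infty \langle f, u_i\rangle u_i = -f,
\]
with $L^2$ convergence guaranteed by Parseval. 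The only step demanding any real thought is the spectral identification of $\mfd$; the remainder is routine Fourier-series manipulation in the eigenbasis.
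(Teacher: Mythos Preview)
Your proof is correct and follows the same approach as the paper, which simply states that the lemma is an easy consequence of the spectral theorem applied to the Dirichlet Laplacian. You have merely supplied the routine details of that invocation: the spectral characterization of $\mfd$ and the termwise computation in the eigenbasis.
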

\begin{proof}
Lemma \ref{inverse} is also an easy consequence of the spectral theorem applied to the Dirichlet Laplacian.
\end{proof}

We are now ready to construct the perturbed Laplacian and finish the proof of Theorem \ref{quethm2} (and hence, also of Theorem \ref{quethm}).
\begin{proof}[Proof of Theorem \ref{quethm2}]
Let $\{\lambda_i^2\}_{i\ge 1}$ be the eigenvalues of $-\Delta$
and let $\Lambda=\{\lambda_i^2\}_{i\ge1}$. Recall that we assume $\om$ is AQE at scale $\lambda^{-\gamma}$ for some $0\leq \gamma\leq 2$.

Fix $\e\in(0,1)$. Our strategy will be to split the eigenvalues between $(1+\e)^n$ and $(1+\e)^{n+1}$ into $N_n$ intervals where $N_n\sim \e(1+\e)^{n(1-\frac{\gamma}{2})}$. We will then reassign all of the eigenvalues in each subinterval to the left boundary of that interval, randomly rotate the corresponding eigenfunctions, and reassign eigenvalues so that the spectrum is simple. This will produce an almost surely QUE operator.

Observe that for all $i\geq 1$, either $\lambda_i< 1+\e$ or there exist positive integers $n$, $0 \leq j\leq N_n-1$ where 
\begin{equation}
\label{eqn:N}N_n:=\lceil (1+\e)^{n\gamma}\rceil\end{equation} such that 
$$(1+\e)^n\left(1+\frac{j\e}{N_n}\right)\leq \lambda_i< (1+\e)^{n}\left(1+\frac{(j+1)\e}{N_n}\right).$$

In the first case, let $\lambda_i'=\lambda_i$. In the second, let 
$$\lambda_i'=(1+\e)^n\left(1+\frac{j\e}{N_n}\right).$$
Note that for $\e>0$ small enough (independent of $\lambda_i$, 
$$|\lambda_i^2-(\lambda_i')^2|\leq \e(1+\e)^{-n\gamma}(1+\e)^n\leq 3 \e|\lambda_i^2|^{1-\frac{\gamma}{2}}.$$ 
Therefore, by Lemma \ref{compare1}, for $s\geq 0$
$$\mf^s(\Phi,\Lambda')=\mf^s(\Phi,\Lambda)$$
and for $s\geq 1-\frac{\gamma}{2}$ and $\ep$ small enough,
\begin{equation}
\label{e:12}\|T_{\Phi,\Lambda'}-T_{\Phi,\Lambda}\|_{\mf^s\to \mf^{s-1+\frac{\gamma}{2}}}\leq 3\e.
\end{equation}
Let $L$ be the set of distinct eigenvalues in $\Lambda'$. For each $l\in L$, let $I_l$ be the set of $i$ such that $\lambda_i'=l$. Then since $\lambda_i\to\infty$, $|I_l|<\infty$ for all $l$. For each $l$, let $(u_i')_{i\in I_l}$ be a random rotation of $(u_i)_{i\in I_l}.$ Then, by Lemma \ref{compare2}, 
$$T_{\Phi,\Lambda'}=T_{\Phi',\Lambda'},\quad\quad \mf^s(\Phi,\Lambda')=\mf^s(\Phi',\Lambda').$$
Now, for each $ l\in L$, 
\[
l=(1+\e)^n\left(1+\frac{j\e}{N_n}\right)
\]
for some $n,j$ or $0<l<(1+\e)$. Denote the set of $l$ with $0<l<1+\e$ by $L_<$ and let $I_{<} := \cup_{l\in L_<}I_l$. Let
$(\lambda_i'')_{i\in I_<}$ be an arbitrary set of distinct real numbers with 
$$(1-\e)\lambda_i'\leq \lambda_i''<\lambda_i'.$$ 
For $l\notin L_<$, let $(\lambda_i'')_{i\in I_l}$ be an arbitrary set of distinct real numbers with 
$$(1+\e)^n\left(1+\frac{j\e}{N_n}\right)\leq \lambda_i''<(1+\e)^n\left(1+\frac{(j+1)\e}{N_n}\right).$$ 
Then for any $i$, and $\e>0$ small enough (independently of $i$)
$$|(\lambda_i')^2-(\lambda_i'')^2|\leq 3\e |(\lambda_i')^2|^{1-\frac{\gamma}{2}}$$ 
and hence 
$$\mf^s(\Phi',\Lambda'')=\mf^s(\Phi',\Lambda')=\mf^s(\Phi,\Lambda')=\mf^s(\Phi,\Lambda)$$
and
$$\|T_{\Phi',\Lambda''}-T_{\Phi,\Lambda'}\|_{\mf^s\to\mf^{s-1+\frac{\gamma}{2}}}\leq 3\e.$$  
Combining this with \eqref{e:12} gives 
$$\|T_{\Phi',\Lambda''}-T_{\Phi,\Lambda}\|_{\mf^s\to \mf^{s-1+\frac{\gamma}{2}}}\leq 6\e.$$ 
Now, let 
$$\Gamma=
\{\lambda_i^{-1}\}_{i\ge 1}.$$
and $G:=T_{\Phi,\Gamma}.$ For convenience, write $T=T_{\Phi,\Lambda}$ and $T''=T_{\Phi',\Lambda''}.$ 

Then by Lemma \ref{compare3}, $G$ is bounded on $L^2(\dbar)$, has range in $\mf(\Phi,\Lambda)$, and satisfies $TG=I$. Therefore, the operator 
$$S:=(T''-T)G$$ maps $L^2$ into $\mf^{\frac{\gamma}{2}}(\Phi,\Lambda)$.
We will show that $S$ satisfies the three assertions of the theorem. Note that the construction of $S$ involves random rotations and what we will actually show is that $S$ satisfies the conditions with probability one. This will suffice to demonstrate the existence of an $S$ that satisfies the requirements.

First, notice that 
$$\|Sf\|_{\mf^{\frac{\gamma}{2}}}=\|(T''-T)Gf\|_{\mf^{\frac{\gamma}{2}}}\leq 10 \e\|Gf\|_{\mf}\leq C\e\|f\|.$$
Now, by Lemma \ref{inverse}, $\mfd=\mf(\Phi,\Lambda)$, therefore 
\[
\mf^{\frac{\gamma}{2}}(\Phi,\Lambda)=\mfd^{\gamma}=(L^2(\om ),\mfd)_{\frac{\gamma}{2}},
\]
the complex interpolation space of $L^2$ and $\mfd$. Hence (i) holds.

Next, note that by Lemma \ref{inverse} for $f\in \mfd$, $-G\Delta f=f. $ Therefore, for $f\in \mfd$,
$$(I+S)\Delta f=(I+(T''-T)G)\Delta f=T''G\Delta f=-T''f.$$
That is, $-(I+S)\Delta=T''$ on $\mfd$. This proves part (ii) of the theorem. Part (iii) of the theorem follows from the fact that $\{u_i'\}$ is an orthonormal basis for $L^2(\dbar)$ and each $u_i'$ is a linear combination of finitely many $u_i$ which have $u_i\in \mfd^s$ for all $s$. 

It remains to show that the eigenfunctions of $T''$ are equidistributed. For this, recall that 
\[
l=(1+\e)^n\left(1+\frac{j\e}{N_n}\right)
\]
for $l$ large enough and hence 
\begin{gather*} 
I_l=\left\{i:\lambda_-\leq \lambda_i<\lambda_+\right\},\\
\lambda_-:=(1+\e)^n\left(1+\frac{j\e}{N_n}\right),\qquad \lambda_+:=(1+\e)^n\left(1+\frac{(j+1)\e}{N_n}\right).
\end{gather*}

Now,
$$r_+:=\frac{\lambda_+}{\lambda_-}=\frac{1+\frac{(j+1)\e}{N_n}}{1+\frac{j\e}{N_n}}=1+\frac{\e}{N_n}+\O{}(\e^2N_n^{-1}).$$
Then since $\om $ is AQE at scale $\alpha(\lambda)=\O{}(\lambda^{-\gamma})$ and $N_n^{-1}\geq c\lambda^{-\gamma}$,  
\begin{equation}\label{eqn:locWeyl}\lim_{l\in L,\,l\to \infty}\frac{1}{|I_l|}\left|\sum_{i\in I_l}\la (A -\overline{\sigma(A)})1_{\dbar}u_i,1_{\dbar}u_i\ral\right| =0\end{equation}
for $A\in\mc{A}\subset\Psi(\re^d)$, where
\begin{align*}
\overline{\sigma(A)}&=\frac{1}{\vol(1\leq |\xi|\leq 1+r_+)}\iint_{1\leq |\xi|\leq 1+r_+}\sigma(A)(x,\xi)1_{\dbar}dxd\xi\\
&=\int_{S^*\re^d}\sigma(A)(x,\xi)1_{\dbar}dxdS(\xi).
\end{align*}
Note that we have used that $\sigma(A)$ is homogeneous of degree $0$. 

Now, by Theorem \ref{rotthm}, for any $A\in \mc{A}$ and $t\in (0,1)$, 
\begin{align*}
&\mathbb{P}\left(\left|\la A1_{\dbar}u_i',1_{\dbar}u_i'\ral-\frac{1}{|I_l|}\sum_{i\in I_l}\la A1_{\dbar}u_i,1_{\dbar}u_i\ral\right|\geq t\right)\\
&\leq C_1 \exp(-C_2(\|A\|)\min(t^2,t)|I_l|).
\end{align*}
\begin{remark}
Note that we may assume that $u_i$ are real valued without loss of generality.
\end{remark}
So, 
\begin{align*}
&\mathbb{P}\left(\max_{i\in I_l}\left|\la A1_{\dbar}u_i',1_{\dbar}u_i'\ral-\frac{1}{|I_l|}\sum_{i\in I_l}\la A1_{\dbar}u_i,1_{\dbar}u_i\ral\right|\geq t\right)\\
&\leq C_1 |I_l|\exp(-C_2(\|A\|)\min(t^2,t)|I_l|).
\end{align*}
The Weyl law (or more precisely the fact that $\om$ is AQE at scale $\lambda^{-\gamma}$) implies that $|I_l|\sim C\e l^{\frac{d-\gamma}{2}}$ and hence that
$$\sum_{l\in L}|I_l|\exp(-C_2(\|A\|)\min(t^2,t)|I_l|)<\infty.$$ 
Using the Borel--Cantelli lemma we have that 
\begin{align*}
\mathbb{P}&\biggl(\biggl|\la A1_{\dbar}u_i',1_{\dbar}u_i'\ral-\frac{1}{|I_l|}\sum_{i\in I_l}\la A1_{\dbar}u_i,1_{\dbar}u_i\ral\biggr|\geq t\\
&\qquad \qquad \text{ for infinitely many $i$ and $l$ with $i\in I_l$}\biggr)=0.
\end{align*}
Thus, by \eqref{eqn:locWeyl} for all $\delta>0$,
$$\mathbb{P}\left(\limsup_{i\to \infty}\left|\la A1_{\dbar}u_i',1_{\dbar}u_i'\ral-\overline{\sigma(A)}\right|\geq \delta\right)=0.$$
The fact that $\mc{A}$ is dense in $C_0(S^*\om )$ and $C_0(S^*\om)$ is separable then implies that $\mc{M}(u_i')=\{1_{\dbar}dxd\sigma(\xi)\}.$

Now, suppose that $f\in \mfd$ is an $L^2$ normalized eigenfunction of $T''$. Then 
$$0=\|T''f-\lambda^2 f\|^2=\sum_i((\lambda_i'')^2-\lambda^2)^2|\la f,u_i'\ral|^2.$$
Hence, since $f\neq 0$, $\lambda=\lambda_i''$ for some $i$. Thus, for any $j$
\begin{align*}
\la\phi_j',f\ral&=\frac{1}{(\lambda_i'')^2}\la u_j',T''f\ral\\
&=\frac{1}{(\lambda_i'')^2}\sum_{k}\la u_j',u_k'\ral(\lambda_k'')^2\la u_k',f\ral=\frac{(\lambda_j'')^2}{(\lambda_i'')^2}\la u_j',f\ral.
\end{align*}
Hence $\langle u_j',f\rangle=0$ or $\lambda_j''=\lambda_i''$. But for $j$ large enough, $\lambda_i''\neq \lambda_j''$ for $i\neq j$ and hence $f=u_i'$ and $T''$ has equidistributed eigenfunctions.

Notice also that this implies that for $\{f_n\}_{n=1}^\infty$ the eigenfunctions of $-(I+S)\Delta $ with $-(I+S)\Delta f_n=\alpha_n^2f_n$, and $n$ large enough, $f_n=\phi_{n_j}'$ and hence
$$-(I+S)\Delta f_n=T''f_n=\alpha_n^2f_n.$$
Consequently,
\begin{equation}
\label{perturbEst}\|Sf_n\|=\|(T''-T)G\Delta \phi_{n_j}'\|=\|(T''-T)\phi_{n_j}'\|\leq C\e\la \alpha_n\ral^{-\gamma}.\end{equation}
This completes the proof of both Theorem \ref{quethm} and part (ii) of Corollary \ref{almostcor}.
\end{proof}

\begin{proof}[Proof of Corollary \ref{almostcor}]
By Theorem \ref{quethm}, (taking for example, $\e=n^{-1}$) there exists a sequence of linear operators $\{S_n\}_{n\ge 1}$ such that 
\[
\|S_n\|_{L^2\to \mfd^{\gamma}}\ra0
\]
and $-(I+S_n)\Delta$ is positive and has QUE eigenfunctions for each $n$. This implies the existence of an orthonormal basis of $L^2(\dbar)$, $\{f_{n,k}\}_{k=1}^\infty$ and $\alpha_{n,k}$ such that $\|f_{n,k}\|=1$ for each $n$ and $k$, $\alpha^2_{n,k} \ra \infty$ as $k\to \infty$, and
\[
(I+S_n)\Delta f_{n,k} = -\alpha^2_{n,k} f_n\,.
\]

Without loss of generality, $\|S_n\|< 1$. Then the series
\[(I+S_n)^{-1}= \sum_{k=0}^\infty (-1)^k S_n^k
\]
converges in the space of bounded linear operators on $L^2(\dbar)$. Moreover, 
\[
(I+S_n)^{-1}-I=-(I+S_n)^{-1}S_n
\]
Therefore, by \eqref{perturbEst}
\begin{align*}
\|-\Delta f_{n,k} - \alpha^2_{n,k} f_{n,k}\|&= \|\alpha^2_{n,k}(I+S_n)^{-1}S_nf_{n,k}\|\\
&\le \alpha^2_{n,k}\|(I+S_n)^{-1}\|_{L^2\to L^2}\|S_nf_n\|\\\
&\le C\frac{\la \alpha_{n,k}\ral ^{2-\gamma}}{1-\|S_n\|_{L^2\to L^2}}\|S_n\|_{L^2\to \mfd^\gamma}\,.
\end{align*}
Dividing both sides by $\alpha^2_{n,k}$ completes the proof since $\|S_n\|\to 0$.
\end{proof}

\section{Improvements on closed manifolds}
\label{last2}
In order to prove Theorem \ref{quethm} on a manifold $M$ with $\vol(M)=1$, we work with $L^2_0(M)$, the set of $0$ mean functions in $L^2$ to remove the 0 eigenvalue of the Laplacian. Let $\{(u_i, \lambda_i^2)\}_{i=1}^\infty$ be the eigenvalues and eigenfunctions of $-\Delta_g$. Then with $T_{\Phi,\Lambda}$ and $T_{\Phi,\Gamma}$ as above, the proof of Theorem~\ref{quethm} for $M$ proceeds as above.

We now prove Corollary \ref{almostcor2}. For this, we need to use the full strength of Theorem \ref{thm:weylStrong}. 

\begin{proof}[Proof of Corollary \ref{almostcor2}]
Recall that the set of closed geodesics is assumed to have measure zero in $S^*M$. Let $\gamma=1$ and return to \eqref{eqn:N}, where 
 we replace $N_n$ with
$$N_n:=\lceil (1+\e)^{n}\rceil\beta_n$$
where $\beta_n\in \mathbb{N}$ has $\beta_n\to\infty$ slowly enough. We then proceed as in the proof of Theorem \ref{quethm} until \eqref{eqn:locWeyl}. At this point we need to show that there exists $\beta_n\to \infty$ slowly enough so that for $\|f\|_{L^\infty(M)}\leq 1$,
$$\lim_{l\in L,i\to \infty}\frac{1}{|I_l|}\Big|\sum_{i\in I_l}\la (f-\overline{f}) u_i,u_i\ral\Big|=0$$
where 
$$\overline{f}=\int_Mfd\vol.$$ 
First, observe that 
\begin{gather*} \lambda_-:=(1+\e)^{n}\left(1+\frac{j\e}{N_n}\right),\quad\quad\lambda_+:=(1+\e)^{n}\left(1+\frac{(j+1)\e}{N_n}\right),\\
I_l=\left\{i\left|\,\lambda_-\leq \lambda_i<\lambda_+\right.\right\}.\end{gather*}
Note also that by Theorem \ref{thm:weylStrong},
$$\sum_{\lambda_1\leq \lambda_j\leq \lambda_2}|u_j(x)|^2=\frac{(\lambda_2-\lambda_1)\lambda_2^{d-1}}{(2\pi )^d}\vol(S^{d-1})+g(\lambda_2,\lambda_1,x)$$
where
$$\lim_{\lambda_2\to \infty}\sup_{\lambda_1\leq \lambda_2}\|g(\lambda_2,\lambda_1,x)\|_{L^\infty_{x}}\lambda_2^{-d+1}=0.$$
Therefore, integrating, we have
$$\#\{\lambda_1\leq \lambda_j\leq \lambda_2\}=\frac{(\lambda_2-\lambda_1)\lambda_2^{d-1}}{(2\pi )^d}\vol(S^{d-1})+\int g(\lambda_2,\lambda_1,x)dx$$
and, provided that 
$$(\lambda_2-\lambda_1)\lambda_2^{d-1}\gg \sup_{\lambda_1\leq \lambda_2}\|g(\lambda_2,\lambda_1,x)\|_{L^\infty},$$
$$\left|\frac{\sum_{\lambda_1\leq \lambda_j\leq \lambda_2}|u_j(x)|^2}{\#\{\lambda_1\leq \lambda_j\leq \lambda_2\}}-1\right|\leq C\|g(\lambda_2,\lambda_1,x)\|_{L^\infty_{x}}\lambda_2^{-d+1}(\lambda_2-\lambda_1)^{-1} $$
Thus, taking $\lambda_1=\lambda_-$ and $\lambda_2=\lambda_+$, we have 
$$\lambda_2\geq  (1+\e)^{n},\quad\quad \lambda_2-\lambda_1\sim \frac{\e}{\beta_n}.$$
Therefore, taking $\beta_n\to \infty$ slowly enough so that 
$$\lim_{n\to \infty}\sup_{\lambda_-\leq \lambda_+}\|g(\lambda_+,\lambda_-,x)\|_{L^\infty_{x}}\lambda_+^{-d+1}(\lambda_+-\lambda_-)^{-1}=0$$
gives that uniformly for $\|f\|_{L^\infty}\leq 1$,
$$\lim_{l\in L,i\to \infty}\frac{1}{|I_l|}\Big|\sum_{i\in I_l}\la (f-\overline{f}) u_i,u_i\ral\Big|=0.$$
\begin{remark}
Note that the uniformity in $f$ is crucial here and is precisely the reason that we have been unable to prove a version of Corollary \ref{almostcor2} giving an orthonormal basis of QUE eigenfunctions. More precisely, the remainder in the Weyl law involving matrix elements $\langle Au_j,u_j\rangle$ depends on more than just $\sup |\sigma(A)|$. In particular, it involves derivatives $\sigma(A)$. 
\end{remark}

Then, using the fact that $f\in C^\infty(M)$ with $\|f\|_{L^\infty(M)}\leq 1$ is dense in the unit ball of the dual space to finite radon measures, that this space is separable, and following the proof of Theorem \ref{quethm} from \eqref{eqn:locWeyl} shows that for all $\e>0$, there exists $S:L^2(M)\to H^1(M)$ so that $\|S\|_{L^2\to H^1}\leq \e$, 
$-(I+S)\Delta_g$ has equidistributed eigenfunctions, $\{(f_n,\alpha_n)\}_{n=1}^\infty,$ and by \eqref{perturbEst} $\|S f_n\|=\o{}(\alpha_n^{-1})\|f_n\|.$

Therefore, 
$$-(I+S)\Delta f_n=\alpha_n^2f_n.$$
Now, 
$$(I+S)^{-1}=\sum_{k=0}^\infty (-1)^kS^k,\quad\quad (I+S)^{-1}-I=-(I+S)^{-1}S.$$ 
Therefore, 
$$(-\Delta -\alpha_n)f_n=-\alpha_n^2(I+S)^{-1}Sf_n$$
and hence, 
\begin{align*} 
\|(-\Delta-\alpha_n^2)f_n\|&\leq |\alpha_n^2|\|(I+S)^{-1}\o{}(\alpha_n^{-1})\|f_n\|\\
&=\o{}(\alpha_n)\|f_n\|
\end{align*}
Dividing by $\alpha_n^2$ completes the proof of the corollary.
\end{proof}

\vskip.2in
\noindent {\bf Acknowledgements.} The authors would like to thank Persi Diaconis, Peter Sarnak, Andr\'as Vasy, Steve Zelditch and Maciej Zworski  for various helpful discussions as well as Dima Jakobson for his careful reading of a previous version. The authors also thank the anonymous referee for many useful comments suggestions.

\appendix 

\section{A local Weyl law on regular domains}\label{ptwisesec}

Throughout this section, we assume that $\Omega\subset \re^d$ is a regular domain. Let $B_t$ be a standard $d$-dimensional Brownian motion (in $\mathbb{R}^d$), starting at some point $x\in \om$. Recall the definition \eqref{exit} of the exit time $\tau_\om$ from the domain $\om$.  
We will need a few well-known facts about this exit time, summarized in the following theorem.
\begin{thm}[Compiled from Proposition 4.7 and Theorems 4.12 and 4.13 of Chapter II in \citet{bass95} and Section 4 of Chapter 2 in \citet{portstone78}]\label{summarythm}
For any regular domain $\om$ (as defined in Section \ref{intro}), there exists a unique function $p:(0,\infty)\times \dbar\times \dbar \ra[0,\infty)$ such that: 
\begin{enumerate}
\item[\textup{(i)}] For any bounded Borel measurable $f:\om\ra\rr$, $x\in \om$, and $t\geq 0$, 
\[
\ee^x(f(B_t); t< \tau_\om) = \int_\om p(t,x,y) f(y)\, dy\,,
\]
where $\ee^x$ denotes expectation with respect to the law of Brownian motion started at $x$.
\item[\textup{(ii)}] There is a complete orthonormal basis $(u_i)_{i\ge 1}$ of $L^2(\dbar)$ such that each $u_i$ is $C^\infty$ in $\om$, vanishes continuously at the boundary, and there are numbers $0<\lambda_1^2\le \lambda_2^2\le\cdots$ tending to infinity such that 
\[
p(t,x,y) = \sum_{i=1}^\infty e^{-\frac{1}{2}\lambda_i^2 t} u_i(x)u_i(y)\,,
\]
where the right side converges absolutely and uniformly on $\dbar\times \dbar$. Moreover, $-\Delta_D u_i = \lambda_i^2 u_i$ for each $i$ where $-\Delta_D$ is the Dirichlet Laplacian on $\Omega$.
\end{enumerate}
\end{thm}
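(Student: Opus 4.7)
The plan is to build everything from the killed semigroup $P_t f(x):=\ee^x(f(B_t);t<\tau_\om)$ and bootstrap regularity and spectral theory from the free heat kernel.

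First I would establish (i) using Hunt's formula. Letting $\tilde p(t,x,y)=(2\pi t)^{-d/2}\exp(-|x-y|^2/2t)$ be the free Gaussian transition density and applying the strong Markov property at the exit time $\tau_\om$, one gets
\[
\ee^x(f(B_t);t<\tau_\om)=\int_{\re^d}\tilde p(t,x,y)f(y)\,dy-\ee^x\!\left[\int_{\re^d}\tilde p(t-\tau_\om,B_{\tau_\om},y)f(y)\,dy;\tau_\om<t\right].
\]
Interchanging the order of integration (justified by Tonelli, since $f$ is bounded and $\tilde p\geq 0$) identifies
\[
p(t,x,y)=\tilde p(t,x,y)-\ee^x\!\left[\tilde p(t-\tau_\om,B_{\tau_\om},y);\tau_\om<t\right],\qquad (x,y)\in\dbar\times\dbar.
\]
This $p$ is continuous, non-negative, bounded by $\tilde p$, and symmetric in $(x,y)$ (by time reversal of Brownian motion, or by the semigroup/adjoint identity $\langle P_tf,g\rangle=\langle f,P_tg\rangle$ obtained by computing both sides against indicators). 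Uniqueness is a standard density argument on $L^\infty(\om)$.

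Next I would get the spectral structure in (ii). The bound $0\leq p(t,x,y)\leq \tilde p(t,x,y)\leq Ct^{-d/2}$ together with $\vol(\om)<\infty$ shows that $P_t:L^2(\dbar)\to L^2(\dbar)$ is Hilbert--Schmidt for every $t>0$, and $P_t$ is self-adjoint and positive by the kernel representation. The semigroup property $P_{s+t}=P_sP_t$ follows from the Markov property. By the spectral theorem for compact self-adjoint operators applied to $P_1$, one obtains a complete orthonormal basis $\{u_i\}$ of $L^2(\dbar)$ with $P_1u_i=\mu_i u_i$ and $\mu_i\downarrow 0$; writing $\mu_i=e^{-\lambda_i^2/2}$ and using the semigroup property extends this to $P_tu_i=e^{-\lambda_i^2t/2}u_i$ for all $t>0$. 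Mercer's theorem for the continuous kernel $p(t,\cdot,\cdot)$ then yields the absolutely and uniformly convergent expansion
\[
p(t,x,y)=\sum_{i=1}^\infty e^{-\lambda_i^2t/2}u_i(x)u_i(y)\quad\text{on } \dbar\times\dbar.
\]

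It remains to identify the $u_i$ with Dirichlet Laplace eigenfunctions and to verify continuous vanishing at $\partial\om$. For interior regularity, since $P_t u_i = e^{-\lambda_i^2 t/2} u_i$, one has $u_i(x)=e^{\lambda_i^2/2}\int p(1,x,y)u_i(y)\,dy$, and iterating the smoothing property of the free heat kernel (together with interior elliptic regularity for $(\tfrac12\Delta+\lambda_i^2/2)u_i=0$ in $\om$) gives $u_i\in C^\infty(\om)$ and $-\Delta u_i=\lambda_i^2u_i$ in the classical sense. Continuous vanishing at $\partial\om$ is the real obstacle and is exactly where the regularity hypothesis (ii) on $\om$ enters: for $x_n\to x_0\in\partial\om$, write $u_i(x_n)=e^{\lambda_i^2/2}\ee^{x_n}(u_i(B_1);1<\tau_\om)$, and use the fact that regular boundary points have $\pp^{x_0}(\tau_\om=0)=1$ together with continuity of Brownian paths to show $\ee^{x_n}(u_i(B_1);1<\tau_\om)\to 0$; this is the classical solvability of the Dirichlet problem at regular points via Kakutani's representation. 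The hard step throughout is this last one, and it is what forces the probabilistic formulation of regularity: without (ii), the eigenfunctions need not extend continuously to zero on the boundary, and the whole matching of $P_t$ with the self-adjoint Dirichlet Laplacian constructed via the quadratic form $Q$ breaks down.
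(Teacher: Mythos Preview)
The paper does not prove this theorem at all: it is stated as a compilation of results from \citet{bass95} (Chapter~II, Proposition~4.7 and Theorems~4.12--4.13) and \citet{portstone78} (Chapter~2, Section~4), and is quoted without argument. So there is no ``paper's own proof'' to compare against; you have supplied one where the authors simply cite the literature.

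Your sketch is essentially the standard route followed in those references and is sound in outline. A couple of points deserve more care if you want it to be self-contained. First, the claim that $p(t,\cdot,\cdot)$ is continuous on all of $\dbar\times\dbar$ (needed for Mercer) already uses the boundary regularity hypothesis: Hunt's formula makes continuity in $y$ obvious, but continuity in $x$ up to $\partial\om$ requires exactly the argument you later invoke for the $u_i$, namely that $\pp^{x_n}(\tau_\om>\delta)\to 0$ as $x_n\to x_0\in\partial\om$. Second, the identification of the generator of $(P_t)$ with $\tfrac12\Delta_D$ (the form-defined Dirichlet Laplacian of Section~\ref{prelim}) is not automatic from interior smoothness and continuous boundary vanishing of the $u_i$; one still has to check $u_i\in H_0^1(\om)$ and that the infinitesimal generator agrees with the Friedrichs extension. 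This is true and classical, but it is the step where the two constructions are actually matched, and your last paragraph only gestures at it. With those two points filled in, your argument reproduces what the cited sources do.
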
 

\begin{remark}
Notice that $p(t,x,y)$ is the Heat kernel of $\Omega$. That is, the kernel of $e^{t\Delta_D/2}.$ 
\end{remark}
Let $\lambda_i$ be as in the above theorem. For each $\ep>0$ and $\lambda >0$ define a set of indices $J_{\ep,\lambda}$ as 
\[
J_{\ep,\lambda} := \{i: \lambda \le \lambda_i< \lambda (1+\ep)\}\,.
\]
Let $|J_{\ep,\lambda}|$ denote the size of the set $J_{\ep,\lambda}$. The following theorem is the main result of this section. 
\begin{thm}\label{ptwisethm}
For any fixed $\ep>0$, $J_{\ep, \lambda}$ is nonempty for all large enough $\lambda$ and for $A\in \Psi(\re^d)$, with symbol $\sigma(A)(x,\xi)$ supported in $K_x\times \re^d$ with $K_x\subset \om$ compact, 
\begin{align*}
\lim_{\lambda \ra \infty} \biggl|\frac{1}{|J_{\ep,\lambda}|}\sum_{i\in J_{\ep,\lambda}} \la (A-\bar{A})1_{\dbar}\phi_ j,1_{\dbar}\phi_j\ral \ \biggr|\,  =0\,.
\end{align*}
where 
$$\bar{A}=\int_{S^*\re^d}\sigma(A)(x,\xi)1_{\dbar}dxdS(\xi)$$
where $S$ is the normalized surface measure on $S^{d-1}$. 
Moreover, $$\lim_{\lambda\ra\infty} \lambda^{-d}|J_{\ep, \lambda}| = \frac{(1+\e)^{d}-1}{(4\pi)^{d/2}\Gamma(d/2+1)}.$$ 
\end{thm}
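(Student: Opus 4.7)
The plan is to reduce Theorem~\ref{ptwisethm} to the heat-kernel comparison estimate described in Section~\ref{sec:outline}, and then pass from heat-kernel asymptotics to an eigenvalue counting statement via a Tauberian argument. Throughout, write $k(t,x,y)$ and $k_D(t,x,y)$ for the kernels of $e^{t\Delta/2}$ on $\re^d$ and of $e^{t\Delta_D/2}$ on $\om$ respectively.

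First I would establish the heat-kernel comparison: for any $\delta > 0$, every $x\in \om$ with $d(x,\partial \om) > \delta$, and every $y\in \re^d$,
$$|\partial_x^\alpha(k(t,x,y) - k_D(t,x,y))| \le C_{\delta,\alpha}\, t^{-N_\alpha}\, e^{-c_\delta/t}, \qquad 0<t<1.$$
The probabilistic approach exploits Theorem~\ref{summarythm}(i), which identifies $k_D(t,x,y)$ with the transition density of Brownian motion killed at $\partial\om$. Applying the strong Markov property at the exit time $\tau_\om$ gives
$$k(t,x,y) - k_D(t,x,y) = \ee^x\bigl[k(t - \tau_\om,\, B_{\tau_\om},\, y)\,;\, \tau_\om < t\bigr].$$
For $x$ at distance at least $\delta$ from $\partial\om$, standard Gaussian tail bounds for Brownian motion yield $\pp^x(\tau_\om \le s) \le C e^{-c\delta^2/s}$ for $s\leq 1$, and combined with $|k(s,x,y)| \le C s^{-d/2}$ this handles the case $\alpha = 0$. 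For higher $\alpha$, one differentiates $k$ in its spatial argument before taking the expectation and uses $|\partial_x^\alpha k(s,x,y)| \le C_\alpha s^{-(d+|\alpha|)/2}$; crucially, only the regularity of Brownian motion itself is used, not the regularity of $\partial\om$.

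Next I would transfer local Weyl asymptotics from the free to the Dirichlet Laplacian. For $A\in \Psi(\re^d)$ with symbol supported in $K\times\re^d$ with $K\Subset \om$, and a suitable $\varphi \in \mathcal{S}(\re)$ with $\hat\varphi$ compactly supported,
$$\sum_j \varphi(\lambda - \lambda_j)\, \la A\, 1_{\dbar} u_j,\, 1_{\dbar} u_j\ral = \tr\bigl(1_{\dbar}\, A\, 1_{\dbar}\, \varphi(\lambda - \sqrt{-\Delta_D})\bigr),$$
and one expresses $\varphi(\lambda - \sqrt{-\Delta_D})$ through the heat semigroup via its Laplace/subordination representation (or equivalently through the half-wave group). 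The kernel comparison then shows that this trace agrees, up to errors of order $O(\lambda^{-\infty})$, with the analogous trace for the free Laplacian, for which the classical pointwise Weyl expansion yields the leading asymptotic with remainder $o(\lambda^d)$. A Tauberian argument of Hörmander type then converts the smoothed asymptotics into the sharp window statement on $[\lambda, \lambda E]$ of Theorem~\ref{localWeyl}, which (after normalising by $|J_{\ep,\lambda}|$) yields Theorem~\ref{ptwisethm}. The counting formula for $|J_{\ep,\lambda}|$ follows by choosing $A$ approximating $1_\om$ and using $\vol(\om)=1$ together with the volume of the annulus $\{1\le |\xi|\le 1+\ep\}$, producing the stated constant $((1+\ep)^d - 1)/((4\pi)^{d/2}\,\Gamma(d/2+1))$.

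The main obstacle is the heat-kernel comparison itself: because $\om$ is only regular in the probabilistic sense (condition (ii) of Section~\ref{intro0}), classical parametrix constructions of $k_D$ near $\partial\om$ are unavailable, so the standard PDE derivation of boundary contributions breaks down. The probabilistic identity above circumvents this difficulty, but one must carefully justify interchanging the expectation with $\partial_x^\alpha$ (appealing to dominated convergence using the Gaussian tail bounds on $k$ and its derivatives), and verify that $\pp^x(\tau_\om\le s)\le C e^{-c\delta^2/s}$ holds using only that $B_\cdot$ must travel distance at least $\delta$ to exit — an estimate entirely independent of boundary regularity. Once this estimate is established, the remaining steps run parallel to \cite{gerardleichtnam93, dodziuk81}.
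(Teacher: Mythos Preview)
Your first step, the heat-kernel comparison via the strong Markov property at $\tau_\om$, is exactly what the paper does (Lemma~\ref{pdifflmm}), and your identification of this as the place where boundary regularity is sidestepped is correct.

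Your second step, however, diverges from the paper and contains a gap. The paper does \emph{not} pass through $\varphi(\lambda-\sqrt{-\Delta_D})$ or the half-wave group at all. Instead it works directly with the heat trace: from Lemma~\ref{pdifflmm} one gets $\tr(A\psi e^{t\Delta_D}\psi)=\tr(A\psi e^{t\Delta_{\re^d}}\psi)+O(e^{-c/t})$, computes the free trace explicitly to obtain $\tr(1_{\dbar}A1_{\dbar}e^{t\Delta_D})\sim (4\pi t)^{-d/2}\bar A$ as $t\to 0^+$ (Lemma~\ref{traceLem}), and then applies the \emph{classical Karamata} Tauberian theorem (Lemma~\ref{taubLem}) to the Laplace transform $\int e^{-t\alpha}\,dF(\alpha)$ to conclude $F(\tau)\sim C\tau^{d}$. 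Your proposed route via subordination cannot deliver what you claim: subordination expresses the Poisson semigroup $e^{-t\sqrt{-\Delta_D}}$ ($t>0$) in terms of the heat semigroup, not the unitary half-wave group $e^{it\sqrt{-\Delta_D}}$, so an estimate of the form $e^{-c/t}$ on the heat side does not yield control of $\varphi(\lambda-\sqrt{-\Delta_D})$ for $\hat\varphi$ compactly supported, and the asserted $O(\lambda^{-\infty})$ agreement is unsubstantiated. On a rough domain no wave parametrix is available to rescue this, which is precisely why the paper stays on the heat side.

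There is a second point you omit: the Karamata theorem requires $F$ to be nondecreasing, but $F_A(\tau)=\sum_{\lambda_j\le\tau}\la A1_{\dbar}u_j,1_{\dbar}u_j\ral$ need not be. The paper handles this by assuming $\sigma(A)\ge 0$ and adding a compact $C\in\Psi^{-1}$ so that $A+C\ge 0$; compactness of $C$ then ensures the correction is $o(\tau^d)$. The general case follows by decomposing $\sigma(A)$ into nonnegative pieces. Finally, the counting asymptotic for $|J_{\ep,\lambda}|$ comes directly from $\tr e^{t\Delta_D}\sim(4\pi t)^{-d/2}$ and Karamata, not by approximating $1_\om$ with symbols.
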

This theorem implies Theorem \ref{localWeyl} since $\sigma(A)$ is homogeneous of degree $0$ and is a variant of results that are sometimes called `local Weyl laws', as in  \cite{zelditch10}. However, we are not aware of a local Weyl law in the literature that applies for a domain as general as the one considered here. Our proof follows closely that in \cite{gerardleichtnam93}, but by using probabilistic methods to obtain estimates on the kernel of $e^{t\Delta_D}$, we are able to weaken the regularity assumptions on the domain.

Since we will have occasion to refer to both the Laplace operator on $L^2(\re^d)$ and the Dirichlet Laplacian in this section, we will denote them respectively by $-\Delta_{\re^d}$ and $-\Delta_D$. Theorem \ref{ptwisethm} will follow from the following lemma 
\begin{lmm}
\label{traceLem}
Take $A \in \Psi^0(\re^d)$ with symbol $\sigma(A)(x,\xi)$ supported in $K_x\times \re^d$ where $K_x\subset \om$ is compact. Then for all $t>0$, $1_{\dbar}A1_{\dbar}e^{t\Delta_{D}}$ is trace class as an operator on $L^2(\dbar)$ and 
$$\lim_{t\to 0^+}\frac{\tr(1_{\dbar}A1_{\dbar}e^{t\Delta_D})}{\tr (e^{t\Delta_D})}=\int_{S^*\re^d}\sigma(A)(x,\xi)1_{\dbar}dxdS(\xi)$$
where $\lambda=1_{\dbar}dxd\sigma(\xi)$ and $\sigma$ is the normalized surface measure on $S^{d-1}$. 
\end{lmm}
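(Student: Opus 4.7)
The plan is to localize the operator $1_{\dbar}A1_{\dbar}$ to a compact subset of $\om$, use a heat kernel comparison to replace $e^{t\Delta_D}$ by the free heat semigroup $e^{t\Delta_{\re^d}}$ on that compact set up to exponentially small errors, and then compute both numerator and denominator via standard small-time symbolic asymptotics.

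\emph{Step 1 (Trace class and localization).} Because $e^{t\Delta_D}$ is trace class on $L^2(\dbar)$ for every $t>0$ (its singular values are $e^{-t\lambda_i^2}$) and $1_{\dbar}A1_{\dbar}$ is bounded, $1_{\dbar}A1_{\dbar}e^{t\Delta_D}$ is trace class. By pseudolocality, since $\sigma(A)$ is supported in $K_x\times\re^d$ with $K_x\Subset\om$, one may choose a properly supported quantization of $\sigma(A)$ so that the Schwartz kernel $K_A(x,y)$ is supported in $K_x'\times K_x''$ with $K_x',K_x''\Subset\om$, modulo a smoothing operator $R$ whose contribution $\tr(1_{\dbar}R1_{\dbar}e^{t\Delta_D})$ is bounded and will be absorbed after dividing by $\tr(e^{t\Delta_D})\sim(4\pi t)^{-d/2}$.

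\emph{Step 2 (Heat kernel comparison).} I will invoke the key estimate highlighted in Section~\ref{sec:outline}, namely $|k(t,x,y)-k_D(t,x,y)|\le C_\delta e^{-c_\delta/t}$ whenever $d(x,\partial\om)>\delta$ and $d(y,\partial\om)>\delta$. This follows from the probabilistic representation of $k_D$ as the transition density of killed Brownian motion together with standard Gaussian hitting estimates (the strong Markov property bounds the probability of hitting $\partial\om$ before time $t$ when starting far from the boundary). Since the kernel of $1_{\dbar}A1_{\dbar}$ is supported in the compact set $K_x'\times K_x''$ which lies at positive distance from $\partial\om$, one obtains
\[
\tr(1_{\dbar}A1_{\dbar}e^{t\Delta_D})=\tr_{\re^d}(Ae^{t\Delta_{\re^d}})+\O{}(e^{-c/t}).
\]

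\emph{Step 3 (Free heat trace asymptotic).} The operator $Ae^{t\Delta_{\re^d}}$ has principal symbol $\sigma(A)(x,\xi)e^{-t|\xi|^2}$, and its trace on $L^2(\re^d)$ is given to leading order by $(2\pi)^{-d}\iint \sigma(A)(x,\xi)e^{-t|\xi|^2}\,dx\,d\xi$. Using homogeneity of degree $0$ of $\sigma(A)$ and passing to polar coordinates $\xi=r\omega$, the radial integral $\int_0^\infty r^{d-1}e^{-tr^2}\,dr=\tfrac{1}{2}t^{-d/2}\Gamma(d/2)$ yields
\[
\tr_{\re^d}(Ae^{t\Delta_{\re^d}})=(4\pi t)^{-d/2}\int_{\re^d}\int_{S^{d-1}}\sigma(A)(x,\omega)1_{\dbar}(x)\,dS(\omega)\,dx+\o{}(t^{-d/2}),
\]
where $dS$ is normalized surface measure on $S^{d-1}$. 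Controlling the subleading terms in the symbol expansion of $Ae^{t\Delta_{\re^d}}$ requires only that each correction has a factor of $t$ (from one additional $\partial_\xi$ on $e^{-t|\xi|^2}$ producing a factor of $t\xi$, which upon radial integration loses a power of $t^{1/2}$), giving remainders of order $t^{-(d-1)/2}$.

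\emph{Step 4 (Denominator and ratio).} Applying the same heat kernel comparison with $A=I$ one gets $\tr(e^{t\Delta_D})=\int_{\om}k_D(t,x,x)\,dx$. Splitting the domain into $\{d(x,\partial\om)>\delta\}$, on which $k_D(t,x,x)=(4\pi t)^{-d/2}+\O{}(e^{-c_\delta/t})$, and $\{d(x,\partial\om)\le\delta\}$, on which $k_D(t,x,x)\le(4\pi t)^{-d/2}$, then sending $t\to 0^+$ and $\delta\to 0^+$ in that order gives $\tr(e^{t\Delta_D})\sim (4\pi t)^{-d/2}\vol(\om)$; this is where the regularity condition $\vol(\partial\om)=0$ enters, via continuity of Lebesgue measure applied to the nested sets $\{x\in\dbar:d(x,\partial\om)<1/n\}\downarrow\partial\om$. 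Dividing the two asymptotics yields the stated limit.

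The main obstacle I anticipate is not the small-time symbolic calculation (which is standard once the problem is pushed from $\om$ to $\re^d$) but rather the rigorous justification of the heat kernel comparison on a merely \emph{regular} domain; here the probabilistic interpretation of $k-k_D$ as the expected value of $k$ evaluated after the Brownian path has exited $\om$, together with the independence of increments and standard Gaussian tail bounds, is essential, since one cannot appeal to boundary regularity.
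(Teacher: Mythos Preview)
Your proof follows the same strategy as the paper: localize to a compact subset of $\Omega$, compare Dirichlet and free heat kernels via the probabilistic representation of $k_D$, compute the free heat trace symbolically, and obtain the denominator asymptotic from the same comparison plus $\vol(\partial\Omega)=0$. The paper's localization uses a cutoff $\psi\in C_c^\infty(\Omega)$ with $\psi\equiv1$ on $\supp\sigma(A)$ and discards the $\Psi^{-1}$ errors $(1-\psi)A$ and $[A,\psi]$ by compactness; this is equivalent to your properly-supported-plus-smoothing decomposition.

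One point in Step~2 needs more care than you gave it. The Schwartz kernel of your localized $A_0$ is a distribution with a conormal singularity on the diagonal, not an integrable function, so the pointwise bound $|k-k_D|\le C_\delta e^{-c_\delta/t}$ by itself does not yield an estimate on $\tr\bigl(A_0(e^{t\Delta_D}-e^{t\Delta_{\re^d}})\bigr)$. The paper closes this by using the full derivative estimate $|\partial_x^\alpha(k-k_D)|\le C\,t^{-d/2-|\alpha|}e^{-\delta^2/2t}$ (which the probabilistic argument also delivers), then controlling the diagonal of the composed kernel via Sobolev embedding: if $g(t,\cdot,y)$ denotes the kernel of $\psi(e^{t\Delta_D}-e^{t\Delta_{\re^d}})\psi$, one bounds the diagonal value of $Ag$ by $\|A\|_{H^m\to H^m}\|g(t,\cdot,y)\|_{H^m}$ for $m>d/2$. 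With that step inserted, your argument is complete and matches the paper's.
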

We first show how Theorem \ref{ptwisethm} follows from Lemma \ref{traceLem}. We will need the following classical Tauberian theorem (see for example \cite{Ta}).
\begin{lmm}
\label{taubLem}
Suppose that $F:[0,\infty)\to \re$ is nondecreasing and for some $A,\gamma>0$,
$$\int_0^\infty e^{-t\alpha}dF(\alpha)\sim At^{-\gamma} \ \ \text{ as } t\to 0^+\,.$$
Then
$$F(\tau)\sim \frac{A\tau^{\gamma}}{\Gamma(\gamma+1)}\ \  \text{ as } \tau \to \infty\,.$$ 
\end{lmm}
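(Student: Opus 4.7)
The plan is to reduce the statement to a continuity theorem for Laplace transforms of a rescaled family of measures, and to conclude via Karamata's device. Let $\mu_F$ be the positive Borel measure on $[0,\infty)$ associated with the nondecreasing $F$, i.e.\ $\mu_F([0,\alpha])=F(\alpha)-F(0)$, and for each $T>0$ introduce the rescaling
\[
\mu_T([0,x]):=\frac{F(Tx)-F(0)}{T^\gamma}, \qquad x\ge 0.
\]
A change of variable $\alpha=T x$ together with the hypothesis gives, for every fixed $s>0$,
\[
\int_0^\infty e^{-sx}\,d\mu_T(x)=\frac{1}{T^\gamma}\int_0^\infty e^{-(s/T)\alpha}\,dF(\alpha)\;\sim\;\frac{1}{T^\gamma}\cdot A\left(\frac{s}{T}\right)^{-\gamma}=\frac{A}{s^\gamma}
\]
as $T\to\infty$. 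Recognizing that $A/s^\gamma=A\int_0^\infty e^{-sx}\,\frac{x^{\gamma-1}}{\Gamma(\gamma)}\,dx$, the Laplace transforms of the $\mu_T$ converge pointwise on $(0,\infty)$ to the Laplace transform of the measure $\mu_\infty$ with density $Ax^{\gamma-1}/\Gamma(\gamma)$.

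Next I would upgrade this Laplace-transform convergence to vague convergence of the measures themselves. Taking $s=1$ in the display above gives a uniform bound on $\int e^{-x}\,d\mu_T(x)$, so by a standard Helly selection argument every subsequence of $\{\mu_T\}$ admits a vaguely convergent sub-subsequence with limit $\nu$, and by dominated convergence the Laplace transform of $\nu$ must equal $A/s^\gamma$ on $(0,\infty)$. Uniqueness of Laplace transforms on $[0,\infty)$ then forces $\nu=\mu_\infty$, so the full family $\mu_T$ converges vaguely to $\mu_\infty$. Since $\mu_\infty$ has the continuous distribution function $x\mapsto Ax^\gamma/\Gamma(\gamma+1)$, this vague convergence upgrades to pointwise convergence of the distribution functions at every point, i.e.\ $\mu_T([0,x])\to A x^\gamma/\Gamma(\gamma+1)$ for each $x>0$. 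Evaluating at $x=1$ (and renaming $T=\tau$) gives $F(\tau)/\tau^\gamma\to A/\Gamma(\gamma+1)$, which is the conclusion.

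The one genuine obstacle is the upgrade from pointwise Laplace-transform convergence to convergence of distribution functions, since the family $\{\mu_T\}$ is not a priori a family of probability measures and one must verify tightness carefully near $x=0$ and at infinity. The standard way around this is Karamata's trick: approximate the indicator $\mathbf{1}_{[0,1]}$ of the interval $[0,1]$, after the substitution $y=e^{-x}$, uniformly on $[0,1]$ by polynomials in $y$ via the Stone--Weierstrass theorem, so that $\mathbf{1}_{[0,1]}(x)$ is approximated uniformly on compacta by linear combinations of $e^{-kx}$ with $k\in\N$; applying the hypothesis to each exponential separately and passing to the limit gives the claim directly, bypassing Helly. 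Since this is precisely the content of the classical Karamata Tauberian theorem (as cited in \cite{Ta}), one could equivalently just invoke it as a black box.
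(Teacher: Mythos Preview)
The paper does not prove this lemma at all: it is stated as the classical Tauberian theorem and simply attributed to \cite{Ta}. So there is no ``paper's own proof'' to compare against; your proposal supplies a proof where the paper supplies a citation.

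Your outline is the standard Karamata argument and is essentially correct. One small imprecision worth flagging: you speak of approximating $\mathbf{1}_{[0,1]}(x)$ \emph{uniformly} by linear combinations of $e^{-kx}$, but of course a discontinuous function cannot be uniformly approximated by continuous ones. The actual Karamata device squeezes the indicator between two continuous functions $g_-\le \mathbf{1}_{[0,1]}\le g_+$ that are close in the sense that $\int_0^\infty (g_+-g_-)(x)\,x^{\gamma-1}\,dx$ is small, approximates \emph{those} by polynomials in $e^{-x}$ via Weierstrass on $[0,1]$ after the substitution $y=e^{-x}$, and then uses the monotonicity of $F$ (equivalently the positivity of $\mu_T$) to pass the inequalities through. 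Your Helly-selection route is also fine, though the ``dominated convergence'' step deserves a word: one needs that for any continuous compactly supported test function $\phi$, the integrals $\int \phi\,d\mu_{T_k}$ converge along the vaguely convergent subsequence, and then that $\int e^{-sx}\,d\mu_{T_k}(x)\to\int e^{-sx}\,d\nu(x)$, which follows by truncating $e^{-sx}$ and using the uniform bound on $\int e^{-x}\,d\mu_T$ you noted. With those standard caveats, the argument is sound.
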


The rest of this section is devoted to the proof of Lemma \ref{traceLem} and Theorem \ref{ptwisethm}. We will freely use the notation introduced in the statements of Theorem \ref{summarythm} and Theorem \ref{ptwisethm} without explicit reference. 
First, note that the following corollary of Theorem \ref{summarythm} is immediate from the continuity of $p$.
\begin{lmm}\label{immcor}
Take any $x,y\in \om$ and let $A_{y,r}$ be the closed ball of radius $r$ centered at $x$. Then
\[
p(t,x,y) = \lim_{r\ra0} \frac{\pp^x(B_t\in A_{y,r},\, t<\tau_\om)}{\vol(A_{y,r})}\,.
\]
\end{lmm}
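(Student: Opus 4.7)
The plan is to deduce this almost immediately from Theorem \ref{summarythm}(i) by testing against the indicator function of $A_{y,r}$, and then pass to the limit using continuity of $p(t,x,\cdot)$.

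First, since $y \in \om$ and $\om$ is open, for all sufficiently small $r>0$ we have $A_{y,r} \subset \om$. Hence $1_{A_{y,r}}$ is a bounded Borel measurable function on $\om$, and Theorem \ref{summarythm}(i) applies with $f = 1_{A_{y,r}}$ to give
\[
\pp^x(B_t \in A_{y,r},\, t<\tau_\om) = \ee^x(1_{A_{y,r}}(B_t); t<\tau_\om) = \int_{A_{y,r}} p(t,x,z)\, dz.
\]
Dividing by $\vol(A_{y,r})$ then exhibits the right-hand side of the claim as the spatial average of $p(t,x,\cdot)$ over the ball $A_{y,r}$.

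Next, I would invoke continuity of $p(t,x,\cdot)$ at $y$ (in fact uniform continuity on $\dbar$), which is guaranteed by the spectral representation in Theorem \ref{summarythm}(ii): the series $\sum_i e^{-\tfrac{1}{2}\lambda_i^2 t} u_i(x) u_i(y)$ converges absolutely and uniformly on $\dbar \times \dbar$ with each $u_i$ continuous, so the limit is continuous. Given any $\delta>0$, choose $r_0$ such that $|p(t,x,z) - p(t,x,y)| < \delta$ whenever $|z - y| \le r_0$. Then for all $r < r_0$,
\[
\left| \frac{1}{\vol(A_{y,r})} \int_{A_{y,r}} p(t,x,z)\, dz - p(t,x,y)\right| \le \delta,
\]
which establishes
\[
\lim_{r \to 0} \frac{\pp^x(B_t \in A_{y,r},\, t<\tau_\om)}{\vol(A_{y,r})} = p(t,x,y).
\]

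There is no real obstacle here: the identity is essentially the probabilistic interpretation of the transition density, and every ingredient (the exit-time representation of the subprobability measure and the continuity of $p$) has already been supplied by Theorem \ref{summarythm}. If one preferred a less manual argument, one could replace the continuity step by the Lebesgue differentiation theorem, which applies equally well since $p(t,x,\cdot) \in L^1(\om)$.
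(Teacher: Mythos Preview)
Your proof is correct and follows essentially the same approach as the paper: apply Theorem~\ref{summarythm}(i) with $f=1_{A_{y,r}}$ to express the probability as $\int_{A_{y,r}} p(t,x,z)\,dz$, then use the continuity of $p(t,x,\cdot)$ furnished by Theorem~\ref{summarythm}(ii) to pass to the limit. The paper's argument is identical, only slightly more terse in invoking the continuity step.
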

\begin{proof}
By assertion (i) of Theorem \ref{summarythm}, 
\[
\pp^x(B_t\in A_{y,r},\, t<\tau_\om) = \int_{A_{y,r}} p(t,x,z)\, dz\,.
\]
By assertion (ii) of Theorem \ref{summarythm}, 
\[
\lim_{r\ra0}\frac{1}{\vol(A_{y,r})} \int_{A_{y,r}} p(t,x,z)\, dz = p(t,x,y)\,.
\]
The proof is completed by combining the two displays.
\end{proof}
The following lemma compares the transition density of killed Brownian motion with the transition density of unrestricted Brownian motion when $t$ is small.
\begin{lmm}\label{pdifflmm}
Let
\[
\rho(t,x,y) := \frac{1}{(2\pi t)^{d/2}} e^{-\|x-y\|^2/2t}
\]
be the transition density of Brownian motion. Take any $x,y\in \om$ and let $\delta_y$, $\delta_x$ be respectively the distance of $y$ and $x$ from $\partial \om$. 
Then 
\begin{gather*}
|\partial_y^\alpha(\rho(t,x,y) - p(t,x,y)) | \le \frac{Ce^{-\delta_y^2/2t}}{t^{d/2+|\alpha|}}\,,\quad \quad  0< t<\delta_y^2/(d+2|\alpha|), \\
 |\partial_x^\alpha(\rho(t,x,y) - p(t,x,y)) | \le \frac{Ce^{-\delta_x^2/2t}}{t^{d/2+|\alpha|}}\,,\quad\quad  0< t<\delta_x^2/(d+2|\alpha|), 
\end{gather*}
where $C$ is  a finite constant that depends only on $d$, $|\alpha|$ and the diameter of the domain $\om$.
\end{lmm}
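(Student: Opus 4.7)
My plan is to represent $\rho-p$ as a single expectation over Brownian paths that have already exited $\om$, and then exploit the fact that the exit point lies at distance at least $\delta_y$ from $y$ to extract Gaussian decay.

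The first step is the probabilistic identity
\[
\rho(t,x,y) - p(t,x,y) = \ee^x\bigl[\rho(t-\tau_\om, B_{\tau_\om}, y);\, \tau_\om \leq t\bigr].
\]
This follows from Theorem~\ref{summarythm}(i) combined with the strong Markov property at $\tau_\om$: testing both sides against a bounded continuous $g$, the left side integrates to $\ee^x[g(B_t)] - \ee^x[g(B_t); \tau_\om > t] = \ee^x[g(B_t); \tau_\om \leq t]$, while conditioning on $\mathcal{F}_{\tau_\om}$ and applying Fubini expresses this as $\int g(y)\,\ee^x[\rho(t-\tau_\om, B_{\tau_\om}, y); \tau_\om \leq t]\,dy$. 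Pointwise identification then uses joint continuity of $\rho$ and interior smoothness of $p$ from Theorem~\ref{summarythm}(ii).

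Next, I will establish a pointwise derivative bound on the free Gaussian kernel in the long-range regime. Writing $\partial_y^\alpha\rho(s,z,y) = s^{-|\alpha|/2}P_\alpha((y-z)/\sqrt{s})\rho(s,z,y)$ for Hermite-type polynomials $P_\alpha$ of degree $|\alpha|$ and bounding $|P_\alpha(u)| \leq C(1+\|u\|^{|\alpha|}) \leq 2C\|u\|^{|\alpha|}$ whenever $\|u\|\geq 1$, I obtain
\[
|\partial_y^\alpha\rho(s,z,y)| \leq C_\alpha\,|y-z|^{|\alpha|}\,s^{-d/2-|\alpha|}\,e^{-|y-z|^2/(2s)}\qquad\text{when }|y-z|^2\geq s.
\]
Under the hypothesis $t < \delta_y^2/(d+2|\alpha|)$ we have $s := t-\tau_\om \leq t \leq \delta_y^2$ so that $|y-B_{\tau_\om}|^2 \geq \delta_y^2 \geq s$, and $|y-B_{\tau_\om}|$ is bounded above by the diameter of $\om$; the prefactor $|y-z|^{|\alpha|}$ is therefore absorbed into the constant. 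Differentiating the identity of the first step under the expectation (justified by smoothness of $\rho$, interior smoothness of $p$, and a dominated convergence argument using the bound above) and applying the derivative estimate with $z = B_{\tau_\om}$ gives
\[
|\partial_y^\alpha(\rho-p)(t,x,y)| \leq C\,\ee^x\bigl[(t-\tau_\om)^{-d/2-|\alpha|} e^{-\delta_y^2/(2(t-\tau_\om))};\,\tau_\om \leq t\bigr].
\]

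The concluding step is a monotonicity argument on the one-variable function $g(s) := s^{-d/2-|\alpha|}e^{-\delta_y^2/(2s)}$: a direct calculation shows $g$ is strictly increasing on $(0, s^*)$ with $s^* = \delta_y^2/(d+2|\alpha|)$, which is precisely the hypothesized upper bound on $t$. Hence $g(t-\tau_\om) \leq g(t) = t^{-d/2-|\alpha|}e^{-\delta_y^2/(2t)}$ pointwise on the event $\tau_\om \leq t$, and since $\pp^x(\tau_\om \leq t) \leq 1$ the expectation is bounded by $g(t)$, yielding the first assertion. The symmetric bound in terms of $\delta_x$ follows from the symmetries $\rho(t,x,y) = \rho(t,y,x)$ (immediate) and $p(t,x,y) = p(t,y,x)$ (from the real spectral expansion in Theorem~\ref{summarythm}(ii)), which translate derivatives in $x$ at $(x,y)$ into derivatives in $y$ at $(y,x)$. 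The main technical nuisance is getting the sharp constant $1/2$ in the exponent: a naive polynomial-times-Gaussian estimate $|P_\alpha(u)|e^{-\|u\|^2/2} \lesssim e^{-\|u\|^2/4}$ would yield only $e^{-\delta_y^2/(4t)}$, so the sharp-exponent bound crucially requires keeping the full Gaussian and absorbing the polynomial via the diameter bound on $|y-z|$.
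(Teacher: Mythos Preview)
Your argument is correct and follows essentially the same route as the paper's: the strong Markov identity $\rho-p=\ee^x[\rho(t-\tau_\om,B_{\tau_\om},y);\,\tau_\om\le t]$, the Hermite-polynomial derivative bound with the diameter absorbing the prefactor, the monotonicity of $s\mapsto s^{-d/2-|\alpha|}e^{-\delta_y^2/(2s)}$ on $(0,\delta_y^2/(d+2|\alpha|))$, and the symmetry $p(t,x,y)=p(t,y,x)$ for the second estimate. The only cosmetic difference is that the paper derives the probabilistic identity via a ball-limit argument (Lemma~\ref{immcor}) rather than by testing against continuous $g$, and the paper's ``easy verification'' is your explicit Hermite computation.
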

\begin{proof}
Since $\tau_\om$ is a stopping time, the strong Markov property of Brownian motion implies that $X_s := B_{s+\tau_\om}$ is a standard Brownian motion started from $B_{\tau_\om}$ that is independent of the stopped sigma algebra of $\tau_\om$, which we will  denote by $\mf_{\tau_\om}$. Consequently, if $A_{y,r}$ is the closed ball of radius $r<\delta_y/2$ centered at $y$, then for any $s\ge 0$, 
\begin{align*}
\pp^x(X_{s} \in A_{y,r}\mid\mf_{\tau_\om}) &=  \frac{1}{(2\pi s)^{d/2}}\int_{A(y,r)} e^{-\|z-B_{\tau_\om}\|^2/2s}\, dz\,.
\end{align*}
Consequently, 
\begin{align*}
&\pp^x(B_t \in A_{y,r}, \, t\ge \tau_\om) = \pp^x(X_{t-\tau_\om} \in A_{y,r},\, t\ge \tau_\om)\\
&= \ee^x(\pp^x(X_{t-\tau_\om} \in A_{y,r}\mid\mf_{\tau_\om})\,;\, t\ge \tau_\om)\\
&= \ee^x\biggl(\frac{1}{(2\pi (t-\tau_\om))^{d/2}}\int_{A(y,r)} e^{-\|z-B_{\tau_\om}\|^2/2(t-\tau_\om)}\, dz\,;\, t\ge \tau_\om\biggr)\,,
\end{align*}
where the term inside the expectation is interpreted as zero if $t=\tau_\om$. 
Dividing both sides by $\vol(A_{y,r})$, sending $r$ to zero, and observing that the term inside the above expectation after division by $\vol(A_{y,r})$ is uniformly bounded by a deterministic constant, we get
\begin{align*}
&\lim_{r\ra 0} \frac{\pp^x(B_t \in A_{y,r}, \, t\ge \tau_\om)}{\vol(A_{y,r})}\\
&= \ee^x\biggl(\frac{1}{(2\pi (t-\tau_\om))^{d/2}} e^{-\|y-B_{\tau_\om}\|^2/2(t-\tau_\om)}\,;\, t\ge \tau_\om\biggr)\,.
\end{align*}
Now note that
\begin{align*}
\pp^x(B_t \in A_{y,r}) - \pp^x(B_t\in A_{y,r}, \, t <\tau_\om)&= \pp^x(B_t\in A_{y,r}, \, t \ge \tau_\om)
\end{align*}
and 
\begin{align*}
\lim_{r\ra0} \frac{\pp^x(B_t\in A_{y,r})}{\vol(A_{y,r})} =\rho(t,x,y)\,,
\end{align*}
and by Lemma \ref{immcor},
\begin{align*}
\lim_{r\ra0} \frac{\pp^x(B_t\in A_{y,r},\, t <\tau_\om)}{\vol(A_{y,r})} =p(t,x,y)\,.
\end{align*}
Combining all of the above observations, we get
\begin{align*}
\rho(t,x,y) - p(t,x,y) &= \ee^x\biggl(\frac{1}{(2\pi (t-\tau_\om))^{d/2}} e^{-\|y-B_{\tau_\om}\|^2/2(t-\tau_\om)}\,;\, t\ge \tau_\om\biggr)\,.
\end{align*}
Now note that any derivative of the term inside the expectation (with respect to $y$) is uniformly bounded by a deterministic constant that does not depend on $y$ or $t$. Therefore derivatives with respect to $y$ can be carried inside the expectation. Consequently,
\begin{align*}
&\partial_y^{|\alpha|}\rho(t,x,y) - \partial_{y}^{|\alpha|} p(t,x,y) \\
&= \ee^x\biggl(\frac{1}{(2\pi (t-\tau_\om))^{d/2}} \partial_y^{|\alpha|}(e^{-\|y-B_{\tau_\om}\|^2/2(t-\tau_\om)})\,;\, t\ge \tau_\om\biggr)\,.
\end{align*} 
If $t\le \delta_y^2/(d+2|\alpha|)$, an easy verification shows that
\begin{align*}
&\biggl|\frac{1}{(2\pi (t-\tau_\om))^{d/2}} \partial_y^{|\alpha|}(e^{-\|y-B_{\tau_\om}\|^2/2(t-\tau_\om)})\biggr|\\
&\le \frac{C}{(t-\tau_\om)^{d/2+|\alpha|}} e^{-\|y-B_{\tau_\om}\|^2/2(t-\tau_\om)}\,,
\end{align*}
where $C$ depends only on $d$, $|\alpha|$ and the diameter of the domain $\om$.
Another easy calculation shows  that the map $u\mapsto (2\pi u)^{-d/2-|\alpha|} e^{-\beta^2/2u}$ is increasing in $u$ when $0< u\le \beta^2/(d+2|\alpha|)$. Therefore if $\tau_\om<t\le \delta_y^2/(d+2|\alpha|)$, then
\[
\frac{1}{(t-\tau_\om)^{d/2+|\alpha|}} e^{-\|y-B_{\tau_\om}\|^2/2(t-\tau_\om)}\le \frac{ e^{-\delta_y^2/2t}}{t^{d/2+|\alpha|}}\,.
\]
Noticing that $p(t,x,y)=p(t,y,x)$ (for example, by part (ii) of Theorem \ref{summarythm}) and $\rho(t,x,y)=\rho(t,y,x)$, this completes the proof of the lemma.
\end{proof}

\begin{proof}[Proof of Theorem \ref{ptwisethm} from Lemmas \ref{traceLem} and \ref{pdifflmm}]
By Lemma \ref{traceLem}, we have that 
\begin{equation}\label{eqn:tr}\frac{\tr(1_{\dbar}A1_{\dbar}e^{t\Delta_D})}{\tr (e^{t\Delta_D})}\to\int_{S^*\re^d}\sigma(A)(x,\xi)1_{\dbar}dxdS(\xi),\quad\quad t\to 0^+.\end{equation}
Since $\{u_j\}_{j\ge 1}$ is an orthonormal basis of $L^2(\dbar)$, 
\begin{equation}\label{eqn:trexp}\tr(1_{\dbar}A1_{\dbar}e^{t\Delta_D})=\sum_{j}e^{-t\lambda_j^2}\la A1_{\dbar}u_j,1_{\dbar}u_j\ral.\end{equation}
By Lemma \ref{pdifflmm} and the assumption that $\vol(\dbar)=1$,
\begin{equation}\label{eqn:trnoA}\tr(e^{t\Delta_D})=\sum e^{-t\lambda_j^2}=\int_\om p(2t,x,x)\sim (4\pi t)^{-d/2}\to \infty ,\quad t\to 0^+.\end{equation}
Putting \eqref{eqn:tr}, \eqref{eqn:trexp}, and \eqref{eqn:trnoA} together we have that 
$$\sum_{j}e^{-t\lambda_j^2}\la A1_{\dbar}u_j,1_{\dbar}u_j\ral\sim (4\pi t)^{-d/2}\int_{S^*\re^d}\sigma(A)(x,\xi)1_{\dbar}dxdS(\xi).$$ 
Now, assuming that $\sigma(A)\geq 0$, and adding a regularizing perturbation  $C\in \Psi^{-1}$ if necessary, so that $(A+C)\geq 0$, we may apply Lemma \ref{taubLem} with 
$$F_{A}(\tau)=\sum_j 1_{\lambda_j\leq \tau}\la (A+C)1_{\dbar}u_j,1_{\dbar}u_j\ral.$$
More precisely, we  apply it with 
$$\tilde{F}_{A}(\tau)=\sum_j 1_{\lambda_j\leq \sqrt{\tau}}\la (A+C)1_{\dbar}u_j,1_{\dbar}u_j\ral$$
and rescale so that 
$$F_A(\tau)\sim \frac{\tau^{d}}{(4\pi )^{d/2}\Gamma(d/2+1)}\int_{S^*\re^d}\sigma(A)(x,\xi)1_{\dbar}dxdS(\xi).$$ 
Now, $C:L^2(\re^d)\to L^2(\re^d)$ is compact. Therefore, $\lim_{j\to \infty}\la C1_{\dbar}u_j,1_{\dbar}u_j\ral =0$, and hence 
$$\sum1_{\lambda_j\leq \tau}\la C1_{\dbar}u_j,1_{\dbar}u_j\ral=\o{}(\tau^{d}).$$ 
Therefore
\begin{align}
&\sum_j1_{\lambda_j\leq \tau}\la A1_{\dbar}u_j,1_{\dbar}u_j\ral \nonumber \\
&\sim \frac{ \tau^{d}}{(4\pi)^{d/2}\Gamma(d/2+1)}\int_{S^*\re^d}\sigma(A)(x,\xi)1_{\dbar}dxdS(\xi)+o(\tau^d).\label{eqn1}
\end{align}
Using \eqref{eqn:trnoA} together with Lemma \ref{taubLem} also gives
\begin{equation}\label{eqn2}\#\{\lambda_j:\lambda_j\leq \tau\}\sim \frac{\tau^{d}}{(4\pi)^{d/2}\Gamma(d/2+1)}.\end{equation}
Subtraction of two formulae like \eqref{eqn1} and \eqref{eqn2} yields the desired asymptotics.
\end{proof}
The proof of Lemma \ref{traceLem} requires one further lemma.
\begin{lmm}\label{traceLemLem}
Let $A\in \Psi(\re^d)$ with symbol compactly supported in $\Omega$ and $\psi\in C_c^\infty(\Omega)$ with $\psi\equiv 1$ on $\supp \sigma(A)$. Then we have 
$$(4\pi t)^{d/2}\tr (A\psi e^{t\Delta_{\re^d}}\psi)\to \int_{S^*\re^d}\sigma(A)(x,\xi)1_{\dbar}dxdS(\xi) \ \ \text{ as } t\to 0^+$$
and there exists $\e>0$, $t_0>0$, so that for $0<t<t_0$,
$$|\tr A\psi(e^{t\Delta_D}-e^{t\Delta_{\re^d}})\psi|\leq \e^{-1}e^{-\e/t}\,.$$
\end{lmm}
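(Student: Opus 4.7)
For the trace asymptotic, my plan is to invoke cyclicity first: since $\psi\equiv 1$ on $\supp\sigma(A)$, the operator $B := \psi A\psi \in \Psi^0(\re^d)$ has kernel compactly supported in $\Omega\times\Omega$ and principal symbol $\sigma(B) = \psi^2\sigma(A) = \sigma(A)$. Writing $B = \mathrm{Op}(b)$ in left quantization with $b\in S^0_{phg}$, every symbol-calculus correction in the composition $B\cdot e^{t\Delta_{\re^d}}$ has the form $\partial_\xi^\alpha b\cdot\partial_x^\alpha e^{-t|\xi|^2}$ with $|\alpha|\ge 1$, and hence vanishes identically because $e^{-t|\xi|^2}$ is $x$-independent. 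This gives the exact identity
\[
\tr(A\psi e^{t\Delta_{\re^d}}\psi) = (2\pi)^{-d}\iint b(x,\xi)e^{-t|\xi|^2}\,dx\,d\xi.
\]
Multiplying by $(4\pi t)^{d/2}$ and rescaling $\xi = \eta/\sqrt t$ converts the right-hand side to $\pi^{-d/2}\iint b(x,\eta/\sqrt t)e^{-|\eta|^2}\,dx\,d\eta$. Since $b_0 = \sigma(A)$ is homogeneous of degree $0$ and the subleading terms are $O(t^{1/2})$ for $\eta\neq 0$, dominated convergence (using that $b$ is bounded and compactly supported in $x$) yields $\pi^{-d/2}\iint e^{-|\eta|^2}\sigma(A)(x,\eta/|\eta|)\,dx\,d\eta$; passing to polar coordinates and simplifying with $\int_0^\infty e^{-r^2}r^{d-1}\,dr = \tfrac12\Gamma(d/2)$ and $\vol(S^{d-1}) = 2\pi^{d/2}/\Gamma(d/2)$ produces exactly $\int_{S^*\re^d}\sigma(A)(x,\xi)1_{\dbar}\,dx\,dS(\xi)$.

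For the remainder estimate, set $R_t := \psi(e^{t\Delta_D}-e^{t\Delta_{\re^d}})\psi$, with kernel $K_{R_t}(y,x) = \psi(y)(p(2t,y,x) - \rho(2t,y,x))\psi(x)$. Since $\supp\psi$ lies at positive distance $\delta$ from $\partial\Omega$, Leibniz combined with the derivative bounds of Lemma~\ref{pdifflmm} (applied in both variables, each confined to the support of $\psi$) gives, for every multiindex pair $(\alpha,\beta)$ and all sufficiently small $t$,
\[
\bigl|\partial_y^\alpha\partial_x^\beta K_{R_t}(y,x)\bigr| \le C_{\alpha,\beta}\,t^{-N_{\alpha,\beta}}\,e^{-\delta^2/(4t)}.
\]
To bound $\tr(AR_t)$, I would expand it using the oscillatory representation
\[
\tr(AR_t) = (2\pi)^{-d}\iiint e^{i(x-y)\cdot\xi}\,a(x,\xi)\,K_{R_t}(y,x)\,dy\,d\xi\,dx
\]
and integrate by parts $N$ times in $y$ via $(1-\Delta_y)^N e^{i(x-y)\cdot\xi} = (1+|\xi|^2)^N e^{i(x-y)\cdot\xi}$: this transfers $2N$ $y$-derivatives onto $K_{R_t}$ at the cost of a factor $(1+|\xi|^2)^{-N}$, and for $N > d/2$ the $\xi$-integral becomes absolutely convergent. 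The resulting bound is $|\tr(AR_t)|\le C\,t^{-N'}\,e^{-\delta^2/(4t)}$; for $t$ small the polynomial factor is absorbed by a slightly weaker exponential, producing $|\tr(AR_t)|\le\varepsilon^{-1}e^{-\varepsilon/t}$ for any $\varepsilon<\delta^2/4$ and all $t$ below some $t_0(\varepsilon)$.

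The principal subtlety lies in the second assertion: although Lemma~\ref{pdifflmm} directly supplies pointwise exponential decay of the kernel, converting that decay into a trace bound against the merely distributional Schwartz kernel of $A$ requires the oscillatory-integral/integration-by-parts maneuver above to turn kernel smoothness into $\xi$-integrability. The first assertion, by contrast, collapses to a one-line symbol computation precisely because all symbol-calculus correction terms vanish when one factor in the composition has an $x$-independent symbol.
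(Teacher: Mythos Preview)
Your argument is correct, and both halves reach the same conclusions as the paper, but by somewhat different routes.

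For the first assertion, the paper writes out the Schwartz kernel of $A\psi e^{t\Delta_{\re^d}}\psi$ explicitly as an oscillatory integral involving both $\psi$ and $\hat\psi$, restricts to the diagonal, rescales $\xi\sqrt t=\zeta$, and applies dominated convergence. You instead pass by cyclicity to $Be^{t\Delta_{\re^d}}$ with $B=\psi A\psi$ and observe that the left-symbol composition $b\#e^{-t|\xi|^2}$ collapses to $b(x,\xi)e^{-t|\xi|^2}$ exactly, because the second factor is $x$-independent; this lands immediately on the clean integral $(2\pi)^{-d}\iint b\,e^{-t|\xi|^2}$. This is a genuine economy. One technical point worth spelling out: the cyclicity step $\tr(A\psi e^{t\Delta}\psi)=\tr(\psi A\psi e^{t\Delta})$ is not a single application of $\tr(ST)=\tr(TS)$, since $e^{t\Delta}$ is not trace class on $L^2(\re^d)$. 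The cleanest justification splits $e^{t\Delta}=e^{t\Delta/2}e^{t\Delta/2}$ and uses that $\psi e^{t\Delta/2}$ and $e^{t\Delta/2}\psi$ are Hilbert--Schmidt, so both traces coincide with $\tr(e^{t\Delta/2}\psi A\psi e^{t\Delta/2})$; alternatively one checks directly that $\mathrm{Op}(be^{-t|\xi|^2})$ is trace class because its symbol is compactly supported in $x$ and Schwartz in $\xi$.

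For the second assertion, the paper bounds the diagonal value $|g_A(t,x,x)|$ via Sobolev embedding together with the $H^m\to H^m$ boundedness of $A\in\Psi^0$, feeding in the $\partial_x$-derivative bounds from Lemma~\ref{pdifflmm}. You instead write the trace as an oscillatory integral and integrate by parts in $y$ to convert smoothness of $K_{R_t}$ into $\xi$-decay. Both are standard mechanisms for the same purpose; yours is arguably the more elementary. One small overclaim to fix: Lemma~\ref{pdifflmm} supplies only pure $\partial_x^\alpha$ or pure $\partial_y^\alpha$ bounds on $p-\rho$, not mixed $\partial_y^\alpha\partial_x^\beta$ bounds. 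Fortunately your integration by parts uses only $y$-derivatives of $K_{R_t}(y,x)$, and since $y\in\supp\psi$ lies at distance $\delta>0$ from $\partial\Omega$, this is exactly the first-variable case of the lemma, so the argument goes through unchanged once you drop the reference to $\beta$.
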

\begin{proof}
The kernel $K(t,x,y)$ of $A\psi e^{t\Delta_{\re^d}}\psi$ is given by 
\begin{align*} 
K(t,x,y)&=(2\pi )^{-d}\int a(x,\xi)\int e^{i\la x-w,\xi\ral -|w-y|^2/4t}(4\pi t)^{-d/2}\psi(w)\psi(y)dwd\xi\\
&=(2\pi )^{-2d}\int a(x,\xi)\int e^{i\la x,\xi\ral}e^{-|\xi-\eta|^2t}e^{-i\la y,\xi-\eta\ral}\hat{\psi}(\eta)\psi(y)d\eta d\xi.
\end{align*}
where 
\begin{align*}
&\left|\partial_x^\alpha\partial_\xi^\beta\left(a-\sum_{j=0}^{N-1}a_j(x,\xi)\right)\right|\leq C_{\alpha\beta}\langle \xi\rangle^{-N},\\
&\qquad \qquad  a_j\in S^j_{hom}(T^*\re^d),\quad a_0(x,\xi)=\sigma(A)(x,\xi)
\end{align*}
So, changing variables so that $\xi \sqrt{t}=\zeta$,
\begin{align*}
&(4\pi t)^{d/2}\tr \psi e^{it\Delta_{\re^d}}\psi\\
& =\pi^{-d/2}t^{d/2}(2\pi )^{-d}\int a(x,\xi)\int e^{-|\xi-\eta|^2t}e^{i\la x,\eta\ral}\hat{\psi}(\eta)\psi(x)d\eta d\xi dx\\
&=\pi^{-d/2}(2\pi )^{-d}\int a(x,\zeta t^{-1/2})\int e^{-|\zeta-\eta\sqrt{t}|^2}e^{i\la x,\eta\ral}\hat{\psi}(\eta)\psi(x)d\eta d\zeta dx
\end{align*}
Now, since $\psi \in \mc{S}$, we can use the dominated convergence theorem and let $t\to 0^+$ to obtain
\begin{align*}
&\lim_{t\to 0^+}(4\pi t)^{d/2}\tr A\psi e^{t\Delta_{\re^d}}\psi\\
&=\pi^{-d/2}(2\pi )^{-d}\int \sigma(A)(x,\zeta )\int e^{-|\zeta|^2}e^{i\la x,\eta\ral}\hat{\psi}(\eta)\psi(x)d\eta d\zeta dx\\
&=\pi^{-d/2}\int \sigma(A)(x,\zeta )\int e^{-|\zeta|^2}\psi(x)\psi(x)d\zeta dx\\
&=\pi^{-d/2}\int \sigma(A)(x,\zeta )\int e^{-|\zeta|^2}d\zeta dx
\end{align*}
where we have used that $\psi\equiv 1$ on $\supp \sigma(A)$. Now, since $\sigma(A)$ is homogeneous of degree 0 in $\zeta$, this is equal to 
\begin{align*}
&\pi^{-d/2}\vol(S^{d-1})\int_{S^*\re^d} \sigma(A)(x,\zeta )1_{\dbar}dxdS(\zeta) \int_0^\infty e^{-r^2}r^{d-1} dr\\
&=\int_{S^*\re^d} \sigma(A)(x,\zeta )1_{\dbar}dxdS(\zeta).
\end{align*}
For the second claim, we use Lemma \ref{pdifflmm}. Let $g(t,x,y)$ denote the kernel of $\psi (e^{t\Delta_D}-e^{t\Delta})\psi$ and $g_A(t,x,y)$ the kernel of $A\psi (e^{t\Delta_D}-e^{t\Delta})\psi$. Then
$$\tr (A\psi (e^{t\Delta_D}-e^{t\Delta_{\re^d}})\psi)=\int_{\dbar} g_A(t,x,x)dx.$$ 
Using the Sobolev embedding, for $m>\frac{d}{2}$,
$$|g_A(t,x,x)|\leq \|g_A\|_{H^m}\leq \|A\|_{H^m\to H^m}\|g(t,\cdot,y)\|_{H^m}.$$
Letting $\delta_x=d(\supp \psi, \partial \om )$, Lemma \ref{pdifflmm} implies for $t<\frac{\delta_x^2}{2(d+2|\alpha|)}$,
$$|\partial_x^{\alpha} g(t,x,y)|\leq C\frac{e^{-\delta_x^2/4t}}{t^{d/2+|\alpha|}}$$
and hence since $\|A\|_{H^m\to H^m}<\infty$, 
$$|g_A(t,x,x)|\leq C\frac{e^{-\delta_x^2/4t}}{t^{d/2+m}}$$
for each  $t<\frac{\delta_x^2}{2(d+2m)}$. 
\end{proof}
We are now ready to prove Lemma \ref{traceLem}.
\begin{proof}[Proof of Lemma \ref{traceLem}]
Since $1_{\dbar}A1_{\dbar}:L^2(\om )\to L^2(\om )$, and $e^{t\Delta_D}$ is trace class, $1_{\dbar}A1_{\dbar}e^{t\Delta_D}$ is trace class. Let $\psi\in C_c^\infty(\om )$ with $\psi \equiv 1$ on $\supp \sigma(A)$. Then, 
$$\psi A= A-(1-\psi)A,\quad\quad A\psi=\psi A+[A,\psi].$$
But, $(1-\psi)A,$ $[A,\psi]\in \Psi^{-1}$ and hence $1_{\dbar}(1-\psi)A$, $1_{\dbar}[A,\psi]$ are compact on $L^2(\re^d)$ and have 
$$\|1_{\dbar}(1-\psi)A1_{\dbar}\phi_k\|+\|1_{\dbar}[A,\psi]1_{\dbar}\phi_k\|\to 0,\quad\quad k\to \infty.$$ 
In particular, 
$$\frac{\tr(1_{\dbar}A1_{\dbar}e^{t\Delta_D})}{\tr{e^{t\Delta_D}}}\sim\frac{\tr (\psi A\psi e^{t\Delta_D})}{\tr e^{t\Delta_D}}=\frac{\tr (A\psi e^{t\Delta_D}\psi)}{\tr e^{t\Delta_D}},\quad\quad t\to 0^+.$$ 
By Lemma \ref{traceLemLem}, the proof of Lemma \ref{traceLem} is now complete.
\end{proof}

\bibliographystyle{plainnat}

\end{document}